\newtheorem{theorem}{Theorem}
\newtheorem{lemma}[theorem]{Lemma}
\newtheorem{corollary}[theorem]{Corollary}
\theoremstyle{definition}
\theoremstyle{remark}
\newcounter{abschnitt}
\newcounter{saveeqn}
\newcommand{\alpheqn}{\setcounter{saveeqn}{\value{abschnitt}}
\renewcommand{\theequation}{\mbox{\arabic{saveeqn}.\arabic{equation}}}}
\newcommand{\reseteqn}{\setcounter{equation}{0}
\renewcommand{\theequation}{\arabic{equation}}}
\begin{document}

\title[An Asymmetric Affine P\'olya--Szeg\"o Principle]{An Asymmetric Affine P\'olya--Szeg\"o Principle}

%    Information for first author

%    Current address
\author{Christoph Haberl}

\address{Department of Mathematics, Polytechnic Institute of NYU\\
              Brooklyn, New York, USA} \email{chaberl@poly.edu}
%    \thanks will become a 1st page footnote.
\author{Franz E. Schuster}
\address{Institute of Discrete Mathematics and Geometry, Vienna University of
Technology} \email{franz.schuster@tuwien.ac.at}

\thanks{Jie Xiao was in part supported by the Natural Science and
Engineering Research Council of Canada.}

%    Information for second author

%\thanks{Support information for the second author.}

\author{Jie Xiao}
%    Address of record for the research reported here
\address{Department of Mathematics and Statistics, Memorial University of Newfoundland, St. John's, NL A1C 5S7, Canada}
\email{jxiao@mun.ca}

%    General info
\subjclass[2000]{46E30, 46E35}

\date{}

%\dedicatory{In memory of ...}

\keywords{}

\begin{abstract}
An affine rearrangement inequality is established which strengthens
and implies the recently obtained affine P\'olya--Szeg\"o
symmetrization principle for functions on $\mathbb{R}^n$. Several
applications of this new inequality are derived. In particular, a
sharp affine logarithmic Sobolev inequality is established which is
stronger than its classical Euclidean counterpart.
\end{abstract}
\maketitle

\section{Introduction}

\setcounter{abschnitt}{1} \alpheqn

The classical P\'olya--Szeg\"o principle \cite{PolSzeg51} states
that the $L^p$ norm of the gradient of a function on $\mathbb{R}^n$
does not increase under symmetric rearrangement. It plays a
fundamental role in the solution to a number of variational problems
in different areas such as isoperimetric inequalities, optimal forms
of Sobolev inequalities, and sharp a priori estimates of solutions
to second-order elliptic or parabolic boundary value problems; see,
for example, \cite{brothziem88, kawohl85, kawohl86, kesavan06,
talenti76,talenti93} and the references therein. In recent years,
many important generalizations  and variations have been obtained
(see, e.g., \cite{burchard97, cianchi00, cianfus02, cianfus06a,
cianfus06b, cianfus08, esptromb04, fervolp04}).

Based on the seminal work of Zhang \cite{Zhang99}, a full affine
analogue of the classical P\'olya--Szeg\"o principle was recently
established by  Lutwak, Yang, and Zhang \cite{LYZ2002b} (for $1 \leq
p < n)$ and by Cianchi, Lutwak, Yang, and Zhang \cite{clyz09} (for
general $p \geq 1$). In this remarkable affine rearrangement
inequality, an $L^p$ affine energy replaces the standard $L^p$ norm
of the gradient leading to an inequality which is significantly
stronger than its classical Euclidean counterpart. Moreover, Lutwak,
Yang, and Zhang \cite{LYZ2002b} and Cianchi et al. \cite{clyz09}
obtained new sharp affine Sobolev, Moser--Trudinger and
Morrey--Sobolev inequalities by applying their affine
P\'olya--Szeg\"o principle, thereby demonstrating the power of this
new affine symmetrization inequality.

In this article we establish a new affine P\'olya--Szeg\"o type
inequality which strengthens and directly implies the affine
P\'olya--Szeg\"o principle of Cianchi, Lutwak, Yang, and Zhang. We
will show that an asymmetric $L^p$ affine energy, which takes
asymmetric parts of directional derivatives into account, leads to a
stronger inequality. As an application of our affine rearrangement
inequality we strengthen the previously known affine
Moser--Trudinger and Morrey--Sobolev inequalities of Cianchi et al.
and recover recent results by the first two authors \cite{hs2} on
asymmetric affine Sobolev inequalities. Among further applications
is a new sharp affine logarithmic Sobolev inequality which is
stronger than the classical Euclidean logarithmic Sobolev
inequality.

For $p \in [1,\infty]$ and $n \geq 2$, let $W^{1,p}(\mathbb{R}^n)$
denote the space of real-valued $L^p$ functions on $\mathbb{R}^n$
with weak $L^p$ partial derivatives. We use $|\cdot|$ to denote the
standard Euclidean norm on $\mathbb{R}^n$ and we write $\|f\|_p$ for
the usual $L^p$ norm of a function $f$ on $\mathbb{R}^n$. For $f \in
W^{1,p}(\mathbb{R}^n)$, we set
\[\|\nabla f\|_p := \left ( \int_{\mathbb{R}^n}|\nabla f|^p\,dx \right )^{1/p}.  \]
Given any $f \in W^{1,p}(\mathbb{R}^n)$ such that $\{x \in
\mathbb{R}^n: |f(x)| > t\}$ has finite Lebesgue measure for every $t
> 0$, its {\it distribution function} $\mu_f\!: (0,\infty)
\rightarrow [0,\infty)$ is defined by
\[\mu_f(t)=V(\{x \in \mathbb{R}^n: |f(x)| > t\}),   \]
where $V$ denotes Lebesgue measure on $\mathbb{R}^n$. The {\it
symmetric rearrangement} of $f$ is the function
$f^{\mbox{\begin{tiny}$\bigstar$\end{tiny}}}:\mathbb{R}^n\to[0,\infty]$
defined by
\[f^{\mbox{\begin{tiny}$\bigstar$\end{tiny}}}(x)=\sup \{t > 0: \mu_f(t) > \kappa_n|x|\}.  \]
Here, $\kappa_n=\pi^{n/2}\!/\Gamma(1+\frac{n}{2})$ denotes the
volume of the Euclidean unit ball in \nolinebreak $\mathbb{R}^n$.

The classical P\'olya--Szeg\"o principle states that if $f \in
W^{1,p}(\mathbb{R}^n)$ for some $p \geq 1$, then
$f^{\mbox{\begin{tiny}$\bigstar$\end{tiny}}} \in
W^{1,p}(\mathbb{R}^n)$ and
\begin{equation} \label{polseg}
\|\nabla f^{\mbox{\begin{tiny}$\bigstar$\end{tiny}}}\|_p  \leq
\|\nabla f\|_p.
\end{equation}

In the affine P\'olya--Szeg\"o inequality the $L^p$ norm of the
Euclidean length of the gradient is replaced by an affine invariant
of functions, the (symmetric) $L^p$ {\it affine energy}, defined,
for $f \in W^{1,p}(\mathbb{R}^n)$, by
\[\mathcal{E}_p(f)=c_{n,p}\left(\int_{S^{n-1}}\|\mathrm{D}_u f\|_p^{-n}\,du\right)^{-1/n},\]
where $c_{n,p}=(n\kappa_n)^{1/n} (
\frac{n\kappa_n\kappa_{p-1}}{2\kappa_{n+p-2}} )^{1/p}$ and
$\mathrm{D}_u f$ is the directional derivative of $f$ in the
direction $u$. Note that $c_{n,p}$ is chosen such that if $f \in
W^{1,p}(\mathbb{R}^n)$, then
\begin{equation} \label{epfstar}
\mathcal{E}_p(f^{\mbox{\begin{tiny}$\bigstar$\end{tiny}}})=\|\nabla
f^{\mbox{\begin{tiny}$\bigstar$\end{tiny}}}\|_p.
\end{equation}
We emphasize the remarkable and important fact that
$\mathcal{E}_p(f)$ is invariant under volume preserving affine
transformations on $\mathbb{R}^n$. In contrast, $\|\nabla f\|_p$ is
invariant only under rigid motions.

The affine P\'olya--Szeg\"o principle established by Cianchi et al.
\cite{clyz09} states that if $f \in W^{1,p}(\mathbb{R}^n)$ for some
$p \geq 1$, then
\begin{equation} \label{affpolszeg}
\mathcal{E}_p(f^{\mbox{\begin{tiny}$\bigstar$\end{tiny}}}) \leq
\mathcal{E}_p(f).
\end{equation}
It was shown in \cite{LYZ2002b} that
\begin{equation} \label{comparison}
\mathcal{E}_p(f) \leq \|\nabla f\|_p,
\end{equation}
with equality if and only if $\|\mathrm{D}_uf\|_p$ is independent of
$u \in S^{n-1}$. Thus, by (\ref{epfstar}), the affine inequality
(\ref{affpolszeg}) is significantly stronger than its classical
Euclidean counterpart (\ref{polseg}).

Define the {\it asymmetric $L^p$ affine energy} by
\[\mathcal{E}_p^+(f)=2^{1/p}c_{n,p}\left(\int_{S^{n-1}}\|\mathrm{D}_u^+ f\|_p^{-n}\,du\right)^{-1/n},\]
where $\mathrm{D}^+_u f(x)=\max\{\mathrm{D}_u f(x),0\}$ denotes the
positive part of the directional derivative of $f$ in the direction
$u$. Observe that only the even part of the directional derivatives
of $f$ contribute to $\mathcal{E}_p(f)$, while in
$\mathcal{E}_p^+(f)$ also asymmetric parts are accounted for.

The main result of this article is the following:

\begin{theorem} \label{main} If $p \geq
1$ and $f \in W^{1,p}(\mathbb{R}^n)$, then
$f^{\mbox{\begin{tiny}$\bigstar$\end{tiny}}} \in
W^{1,p}(\mathbb{R}^n)$ and
\begin{equation} \label{hspolszeg}
\mathcal{E}_p^+(f^{\mbox{\begin{tiny}$\bigstar$\end{tiny}}}) \leq
\mathcal{E}_p^+(f).
\end{equation}
\end{theorem}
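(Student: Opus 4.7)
The plan is to emulate the proof of the symmetric affine Pólya--Szegő inequality in \cite{clyz09}, systematically replacing $\mathrm{D}_u f$ by $\mathrm{D}_u^+ f$. By a standard density argument using the pointwise bound $|\mathrm{D}_u^+ f-\mathrm{D}_u^+ g|\leq|\nabla(f-g)|$ (which makes $f\mapsto\|\mathrm{D}_u^+ f\|_p$ continuous on $W^{1,p}(\mathbb{R}^n)$ uniformly in $u\in S^{n-1}$), it suffices to prove \eqref{hspolszeg} for $f$ smooth, non-negative, compactly supported, and satisfying $|\nabla f|>0$ almost everywhere on $\{f>0\}$.

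For such $f$, the coarea formula yields
\[
\|\mathrm{D}_u^+ f\|_p^p=\int_0^\infty\!\!\int_{\partial\{f>t\}}|\nabla f|^{p-1}\max(-\langle u,\nu_t\rangle,0)^p\,d\mathcal{H}^{n-1}\,dt,
\]
where $\nu_t$ denotes the outer unit normal to the super-level set $M_t=\{f>t\}$. Thus $\|\mathrm{D}_u^+ f\|_p^p$ is a one-parameter superposition of asymmetric $L^p$ cosine transforms of surface area measures carried by $\partial M_t$. As a first consequence, for the radially decreasing $f^\bigstar$ the $x\mapsto-x$ symmetry of the radial profile gives $\|\mathrm{D}_u^+ f^\bigstar\|_p=2^{-1/p}\|\mathrm{D}_u f^\bigstar\|_p$ for every $u\in S^{n-1}$, so $\mathcal{E}_p^+(f^\bigstar)=\mathcal{E}_p(f^\bigstar)=\|\nabla f^\bigstar\|_p$ by \eqref{epfstar}; hence \eqref{hspolszeg} is equivalent to $\|\nabla f^\bigstar\|_p\leq\mathcal{E}_p^+(f)$. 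The estimate $\mathcal{E}_p^+(f)\leq\mathcal{E}_p(f)$, obtained by applying Jensen's inequality for the convex function $s\mapsto s^{-n/p}$ to the identity $\|\mathrm{D}_u f\|_p^p=\|\mathrm{D}_u^+ f\|_p^p+\|\mathrm{D}_{-u}^+ f\|_p^p$, confirms that \eqref{hspolszeg} is strictly stronger than \eqref{affpolszeg}.

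The bulk of the proof consists in showing that $\mathcal{E}_p^+$ is non-increasing under Steiner symmetrization in every direction $v\in S^{n-1}$; iterating and invoking the classical approximation of $f^\bigstar$ by a sequence of iterated Steiner symmetrands of $f$ will then yield \eqref{hspolszeg} via the lower semicontinuity of $\mathcal{E}_p^+$. The Steiner monotonicity itself is to be established by combining the level-set representation above with a fibrewise application of the asymmetric $L^p$ Petty projection inequality underlying \cite{hs2}: since Steiner symmetrization of $f$ induces Steiner symmetrization of each level set $M_t$, what is needed is a version of Petty's inequality that controls the asymmetric $L^p$ cosine transform of the surface area measure of $M_t$ in terms of that of its symmetrand. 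This fibrewise step is the main obstacle: the intrinsic asymmetry of the cosine kernel $\max(-\langle u,\nu_t\rangle,0)^p$ forces one to track the positive and negative portions of $\langle u,\nu_t\rangle$ separately at each level $t$, so the symmetric argument of \cite{clyz09} cannot be quoted as a black box and must be reworked in terms of the asymmetric $L^p$ projection bodies, with the integration over $t$ handled by a Minkowski-type inequality for the $-n$-th power to avoid any loss.
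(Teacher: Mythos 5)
Your preliminary observations are sound: the coarea representation of $\|\mathrm{D}_u^+f\|_p^p$ over level sets is exactly the identity the paper uses (its (\ref{duplus})), the fact that $\mathcal{E}_p^+(f^{\bigstar})=\mathcal{E}_p(f^{\bigstar})=\|\nabla f^{\bigstar}\|_p$ is correct, and your Jensen argument for $\mathcal{E}_p^+\leq\mathcal{E}_p$ is fine (though it is only an aside). The problem is that your plan defers the entire content of the theorem to a lemma you do not prove: monotonicity of $\mathcal{E}_p^+$ under Steiner symmetrization. You yourself flag the ``fibrewise'' step as the main obstacle, and as sketched it cannot work: Steiner symmetrization of a level set $M_t$ does not decrease the asymmetric cosine transform $\int(\,-u\cdot\nu_t)_+^p|\nabla f|^{p-1}d\mathcal{H}^{n-1}$ pointwise in $u$ (already for $p=1$ the brightness function of a body is not pointwise monotone under Steiner symmetrization; only the volume of the polar projection body is). So a level-by-level comparison followed by integration in $t$ and $u$ is not available, and the genuine difficulty — interchanging the $t$-integral with the $(-n/p)$-power and the $u$-integral without loss, and then invoking a Petty-type inequality — is precisely what remains unproved. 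No Steiner-monotonicity statement for $\mathcal{E}_p^+$ (or even $\mathcal{E}_p$) is quoted or established, and the final limiting step would in addition require lower semicontinuity of $\mathcal{E}_p^+$ along the iterated symmetrands, which also is not justified (lower semicontinuity of each $\|\mathrm{D}_u^+\cdot\|_p$ goes the wrong way after taking the $-n$-th power, so one needs a uniform positive lower bound in $u$ along the sequence).

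For comparison, the paper does not symmetrize at all. It fixes $f\in C_0^\infty(\mathbb{R}^n)$, $p>1$, applies Minkowski's integral inequality to pull the $t$-integration outside the $(-n/p)$-power, and then, for almost every level $t$, solves a volume-normalized discrete-data $L^p$ Minkowski problem (Theorem \ref{minkprob}, \cite{hlyz}) to produce polytopes $P_k^t\in\mathcal{K}^n_{\mathrm{o}}$ converging to a body $K_f^t$ with $V(P_k^t)^{-1/p}\Pi_p^+P_k^t\to L_f^t$ (Lemma \ref{l1}); the asymmetric $L^p$ Petty projection inequality of Theorem \ref{hsinequ} applied to the $P_k^t$, together with the normalization (\ref{volpkt}), H\"older's inequality and the classical Minkowski inequality (\ref{extmink}), bounds each level's contribution from below by $n^{p-1}\mu_f(t)^{p(n-1)/n}(-\mu_f'(t))^{1-p}$, and a direct computation (Lemma \ref{symmlemm}) shows this lower bound integrates exactly to $\mathcal{E}_p^+(f^{\bigstar})^p$; the cases $p=1$ and general $f\in W^{1,p}$ follow by limiting and density arguments. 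If you want to salvage your route you must either prove the Steiner monotonicity of $\mathcal{E}_p^+$ (a substantial shadow-system argument in its own right, handling the normal vectors of the symmetrized level sets globally in $u$), or switch to the paper's direct comparison with $f^{\bigstar}$, where the need for bodies with the origin interior — hence the discrete-data normalized Minkowski problem rather than the even one used in \cite{clyz09} — is the genuinely new technical point in the asymmetric setting.
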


\vspace{0.3cm}

In \cite{hs2} it was shown that
\begin{equation} \label{comparison2}
\mathcal{E}_p^+(f) \leq \mathcal{E}_p(f),
\end{equation}
with equality if and only if $\|\mathrm{D}_u^+ f\|_p$ is an even
function on $S^{n-1}$. Thus, the new affine P\'olya--Szeg\"o
inequality (\ref{hspolszeg}) is stronger than inequality
(\ref{affpolszeg}) of Cianchi et al. In particular, inequality
(\ref{hspolszeg}) is also stronger than the classical
P\'olya--Szeg\"o inequality (\ref{polseg}).

In the proof of Theorem 1 critical use is made of a new affine
isoperimetric inequality recently established by the first two
authors \cite{hs}. We will apply this crucial geometric inequality
to convex bodies (associated with the given function) which occur as
solutions to a family of (normalized) $L^p$ Minkowski problems.
These techniques clearly demonstrate that there are deep connections
between the affine geometry of convex bodies and sharp affine
functional inequalities (see also \cite{clyz09, LYZ2002b, LYZ2006,
Zhang99}). The background material on the geometric core of Theorem
1 will be discussed in detail in Sections 3 and 4.

The classical P\'olya--Szeg\"o principle has important applications
to a large class of variational problems, for example, it reduces
the proof of sharp Sobolev inequalities to a considerably more
manageable one-dimensional problem. It was shown in \cite{clyz09}
and \cite{LYZ2002b} that the affine P\'olya--Szeg\"o inequality
(\ref{affpolszeg}) provides a similar unified approach to affine
functional inequalities. In particular, sharp affine versions of
$L^p$ Sobolev, Moser--Trudinger and Morrey--Sobolev inequalities
were derived from inequality (\ref{affpolszeg}), all of which are
stronger than their Euclidean counterparts.

In Section 6 we aim to complete the picture given in \cite{clyz09},
\cite{LYZ2002b} and \cite{Zhang99} by deriving new sharp
(asymmetric) affine versions of a number of fundamental functional
inequalities such as $L^p$ Sobolev inequalities, Nash's inequality,
logarithmic Sobolev inequalities and Gagliardo--Nirenberg
inequalities. As an example, we state here our affine version of the
sharp $L^p$ logarithmic Sobolev inequality (the Euclidean analogue
is due to Del Pino and Dolbeault \cite{deldol03}).

\begin{corollary} If $f \in W^{1,p}(\mathbb{R}^n)$, with $1 \leq p
< n$, such that $\|f\|_p=1$, then
\begin{equation} \label{afflogsob}
\int_{\mathbb{R}^n} |f|^p \log |f|\,dx \leq \frac{n}{p} \log \left (
b_{n,p}\, \mathcal{E}_p^+(f)  \right ).
\end{equation}
For $p>1$, the optimal constant $b_{n,p}$ is given by
\begin{equation} \label{optlogsob}
b_{n,p}=\left ( \frac{p}{n} \right )^{1/p}\left (\frac{p-1}{e}
\right )^{1-1/p} \left (
\frac{\Gamma(1+\frac{n}{2})}{\pi^{n/2}\Gamma(1+\frac{n(p-1)}{p})}
\right )^{1/n}
\end{equation}
and $b_{n,1}=\lim_{p\rightarrow 1}b_{n,p}$. If $p=1$, equality holds
in (\ref{afflogsob}) for characteristic functions of ellipsoids and
for $p>1$ equality is attained when
\begin{equation} \label{extrafflogsob}
f(x)=\frac{\pi^{n/2}\Gamma(1+\frac{n}{2})}{a^{n(p-1)/p}\Gamma(1+\frac{n(p-1)}{p})}\exp
\left ( - \frac{1}{a}|\phi(x-x_0)|^{p/(p-1)} \right ),
\end{equation}
with $a>0$, $\phi \in \mathrm{SL}(n)$ and $x_0 \in \mathbb{R}^n$.
\end{corollary}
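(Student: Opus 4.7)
The plan is to reduce the statement to the classical sharp Euclidean $L^p$ logarithmic Sobolev inequality of Del Pino and Dolbeault by symmetric rearrangement, and then to use Theorem \ref{main} to upgrade the resulting Euclidean inequality to an affine one. The steps are, in order: apply Del Pino--Dolbeault to the radial rearrangement $f^{\bigstar}$; identify the left-hand side with the integral in \eqref{afflogsob} by equimeasurability; replace $\|\nabla f^{\bigstar}\|_p$ on the right-hand side by $\mathcal{E}_p^+(f)$ via Theorem \ref{main}; and finally verify that the optimal constant is precisely $b_{n,p}$.

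The key point that makes this chain work is the identity
\[ \mathcal{E}_p^+(f^{\bigstar}) \;=\; \mathcal{E}_p(f^{\bigstar}) \;=\; \|\nabla f^{\bigstar}\|_p. \]
The second equality is exactly \eqref{epfstar}. For the first, observe that any radial function $g$ satisfies $\nabla g(x) = g'(|x|)\,x/|x|$, so $\mathrm{D}_u g$ is odd under $x \mapsto -x$; thus $\|\mathrm{D}_u^+ g\|_p^p = \tfrac{1}{2}\|\mathrm{D}_u g\|_p^p$, and substituting into the definition of $\mathcal{E}_p^+$ one sees that the prefactor $2^{1/p}$ exactly compensates and gives $\mathcal{E}_p^+(g) = \mathcal{E}_p(g)$. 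Combining this with Theorem \ref{main} yields
\[ \|\nabla f^{\bigstar}\|_p \;=\; \mathcal{E}_p^+(f^{\bigstar}) \;\leq\; \mathcal{E}_p^+(f). \]
Because $|f|$ and $f^{\bigstar}$ are equimeasurable one has $\int_{\mathbb{R}^n} |f|^p \log |f|\,dx = \int_{\mathbb{R}^n} (f^{\bigstar})^p \log f^{\bigstar}\,dx$ and $\|f^{\bigstar}\|_p = \|f\|_p = 1$. Applying the sharp Del Pino--Dolbeault log-Sobolev inequality (whose optimal constant is known to be exactly the $b_{n,p}$ of \eqref{optlogsob}) to the radial function $f^{\bigstar}$ and then inserting the above estimate on the right-hand side gives \eqref{afflogsob}. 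Sharpness follows because the Del Pino--Dolbeault extremals are radial and hence achieve equality throughout the entire chain.

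For the equality discussion I would argue as follows. For $p>1$, equality in Del Pino--Dolbeault forces $f^{\bigstar}$ to equal the radial generalized Gaussian on the right-hand side of \eqref{extrafflogsob} (up to the admissible parameters $a>0$ and a translation), while equality in \eqref{hspolszeg} forces $f$ to be a volume-preserving affine image of its rearrangement; composing these two descriptions produces precisely the family \eqref{extrafflogsob}. The case $p=1$ is analogous, with radial characteristic functions of balls being promoted to characteristic functions of arbitrary ellipsoids. I expect this last step to be the main obstacle: extracting a clean equality characterization from Theorem \ref{main} is delicate, since it requires tracing the equality conditions back through the asymmetric affine isoperimetric inequality and the $L^p$ Minkowski-type problems used in its proof; the inequality itself, by contrast, falls out essentially as a one-line combination of Theorem \ref{main} with a known Euclidean sharp inequality.
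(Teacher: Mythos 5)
Your proof of the inequality itself is correct and is essentially the paper's argument: rearrange, apply Del Pino--Dolbeault to $f^{\mbox{\begin{tiny}$\bigstar$\end{tiny}}}$, and replace $\|\nabla f^{\mbox{\begin{tiny}$\bigstar$\end{tiny}}}\|_p$ by $\mathcal{E}_p^+(f)$ via Theorem \ref{main}. Your justification of the key identity $\mathcal{E}_p^+(f^{\mbox{\begin{tiny}$\bigstar$\end{tiny}}})=\|\nabla f^{\mbox{\begin{tiny}$\bigstar$\end{tiny}}}\|_p$ (oddness of $\mathrm{D}_u f^{\mbox{\begin{tiny}$\bigstar$\end{tiny}}}$ under $x\mapsto -x$, so that the factor $2^{1/p}$ cancels, combined with \eqref{epfstar}) is a legitimate variant of the paper's computation, which instead writes $\mathcal{E}_p^+(f^{\mbox{\begin{tiny}$\bigstar$\end{tiny}}})$ explicitly as a one-dimensional integral in $f^*$; both are fine.

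The equality discussion is where you go astray, though the error is one of overreach rather than of substance. The corollary only claims that equality \emph{is attained} for the family \eqref{extrafflogsob}; no characterization of all extremals is asserted. For this, your observation that radial Del Pino--Dolbeault extremals give equality through the whole chain settles the case $\phi=\mathrm{id}$, and the general case follows at once from the $\mathrm{SL}(n)$- and translation-invariance of $\int|f|^p\log|f|\,dx$, $\|f\|_p$ and $\mathcal{E}_p^+(f)$ --- this is exactly the paper's one-line argument. By contrast, the route you sketch (``equality in \eqref{hspolszeg} forces $f$ to be a volume-preserving affine image of its rearrangement'') is not available: neither Theorem \ref{main} nor the underlying asymmetric $L^p$ Petty projection inequality (Theorem \ref{hsinequ}) comes with an equality characterization in this paper (Theorem \ref{hsinequ} only states that ellipsoids attain equality), so that step is unsupported. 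Moreover, even if it were available, it would only give a \emph{necessary} condition on extremals, whereas what the statement requires is the \emph{sufficiency} of the family \eqref{extrafflogsob}, which is the easy invariance argument above. Replace your final paragraph by that invariance argument and the proof is complete.
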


\section{Background material}

\setcounter{abschnitt}{2} \reseteqn \alpheqn

For quick later reference we recall in this section some background
material from the $L^p$ Brunn--Minkowski theory of convex bodies.
This theory has its origins in the work of Firey from the 1960's and
has expanded rapidly over the last decade (see, e.g.,
\cite{Campi:Gronchi02a, Chou:Wang:06, hs, hlyz, ludwig03,
Ludwig:Minkowski, centro, lutwak86, Lutwak93b, Lutwak96,
Lutwak:Oliker, LYZ2000a, LYZ2000b, lyz04c, LYZ2006}). We will also
list some basic, and for the most part well known, facts from real
analysis needed in the proof of Theorem \ref{main}.

A {\it convex body} is a compact convex subset of $\mathbb{R}^n$
with nonempty interior. We write $\mathcal{K}^n$ for the set of
convex bodies in $\mathbb{R}^n$ endowed with the Hausdorff metric
and we denote by $\mathcal{K}_{\mathrm{o}}^n$ the set of convex
bodies containing the origin in their interiors. Each nonempty
compact convex set $K$ is uniquely determined by its {\it support
function} $h(K,\cdot)$, defined by $h(K,x)=\max\{x\cdot y:\,\,y\in
K\}$, $x\in\mathbb{R}^n$, where $x\cdot y$ denotes the usual inner
product of $x$ and $y$ in $\mathbb{R}^n$. Note that $h(K,\cdot)$ is
positively homogeneous of degree one and subadditive. Conversely,
every function with these properties is the support function of a
unique compact convex set.

If $K \in \mathcal{K}_{\mathrm{o}}^n$, then the polar body $K^*$ of
$K$ is defined by
\[K^*=\{x \in \mathbb{R}^n: x \cdot y \leq 1 \mbox{ for all } y \in K\}.  \]
From the polar formula for volume it follows that the
$n$-dimensional Lebesgue measure $V(K^*)$ of the polar body $K^*$
can be computed by
\begin{equation}\label{polvol}
V(K^*)=\frac{1}{n}\int_{S^{n-1}}h(K,u)^{-n}\,du,\end{equation} where
integration is with respect to spherical Lebesgue measure.

If $M$ and $N$ are compact sets in $\mathbb{R}^n$, then the
Brunn--Minkowski inequality (see, e.g., \cite{Schneider:CB}) states
that
\[V(M + N)^{1/n} \geq V(M)^{1/n} + V(N)^{1/n}.  \]
Here, $M + N = \{x + y: x \in M \mbox{ and } y \in N\}$. For a
compact set $M$ and a convex body $K$ in $\mathbb{R}^n$, define the
mixed volume $V_1(M,K)$ by
\[nV_1(M,K)=\liminf \limits_{\varepsilon \rightarrow 0^+} \frac{V(M + \varepsilon K) - V(M)}{\varepsilon}. \]
The Brunn-Minkowski inequality immediately gives the Minkowski
inequality
\begin{equation}\label{extmink}
V_1(M,K)^n \geq V(M)^{n-1} V(K).
\end{equation}

If the boundary $\partial M$ of $M$ is a $C^1$ submanifold of
$\mathbb{R}^n$, then
\begin{equation}\label{intrep}
V_1(M,K)=\frac{1}{n}\int_{\partial M}
h(K,\nu(x))\,d\mathcal{H}^{n-1}(x),
\end{equation}
where $\nu(x)$ is the outward unit normal vector of $\partial M$ at
$x$ and $\mathcal{H}^{n-1}$ denotes $(n-1)$-dimensional Hausdorff
measure (cf.\ \cite[Lemma 3.2]{Zhang99}).

For real $p \geq 1$ and $\alpha, \beta > 0$, the {\it $L^p$
Minkowski--Firey combination} of $K, L \in
\mathcal{K}^n_{\mathrm{o}}$ is the convex body $\alpha \cdot K +_p
\beta \cdot L$ defined by
\[h(\alpha \cdot K +_p \beta \cdot L,\cdot)^p=\alpha h(K,\cdot)^p+\beta h(L,\cdot)^p.   \]
The {\it $L^p$ mixed volume} $V_p(K,L)$ of $K$,
$L\in\mathcal{K}_{\mathrm{o}}^n$ was defined in \cite{Lutwak93b} by
\begin{equation*}
V_p(K,L)=\frac{p}{n}\lim_{\varepsilon\rightarrow
0^+}\frac{V(K+_p\varepsilon\cdot L)-V(K)}{\varepsilon}.
\end{equation*}
Clearly, the diagonal form of $V_p$ reduces to ordinary volume,
i.e., for $K \in \mathcal{K}^n_{\mathrm{o}}$,
\begin{equation} \label{vol1}
V_p(K,K)=V(K).
\end{equation}
It was also shown in \cite{Lutwak93b} that for all convex bodies $K$
and $L$
\begin{equation} \label{intrepsp}
V_p(K,L)=\frac{1}{n}\int_{S^{n-1}}h(L,u)^ph(K,u)^{1-p}\,dS(K,u),
\end{equation}
where the measure $S(K,\cdot)$ on $S^{n-1}$ is the classical surface
area measure of $K$. Recall that for a Borel set $\omega \subseteq
S^{n-1}$, $S(K,\omega)$ is the $(n-1)$-dimensional Hausdorff measure
of the set of all boundary points of $K$ for which there exists a
normal vector of $K$ belonging to $\omega$.

We turn now to the analytical preparations. Let $f\in
L^p(\mathbb{R}^n)$ for some $p\geq1$ and suppose that $V(\{x \in
\mathbb{R}^n: |f(x)|>t\})< \infty$ for every $t > 0$. The {\it
decreasing rearrangement} $f^*:[0,\infty)\to[0,\infty]$ of $f$ is
defined by
\[f^*(s)=\sup\{t\geq0:\,\,\mu_f(t)>s\},\]
where $\mu_f$ denotes the distribution function of $f$. The
symmetric rearrangement
$f^{\mbox{\begin{tiny}$\bigstar$\end{tiny}}}$ of $f$ can now be
written in the form
\[f^{\mbox{\begin{tiny}$\bigstar$\end{tiny}}}(x)=f^*(\kappa_n|x|^n).\]
Note that $f$ and $f^{\mbox{\begin{tiny}$\bigstar$\end{tiny}}}$ are
equimeasureable, i.e.,
$\mu_f=\mu_{f^{\mbox{\begin{tiny}$\bigstar$\end{tiny}}}}$.
Therefore, we have
\begin{equation} \label{supnorm}
\|f\|_{\infty}=f^*(0)=\|f^{\mbox{\begin{tiny}$\bigstar$\end{tiny}}}\|_{\infty}
\end{equation}
and
\begin{equation} \label{sprt}
V(\textnormal{sprt}\,f)=V(\textnormal{sprt}\,f^{\mbox{\begin{tiny}$\bigstar$\end{tiny}}}),
\end{equation}
where $\textnormal{sprt}\,f$ denotes the support of $f$. Moreover,
the equality
\begin{equation} \label{intstar}
\int_{\mathbb{R}^n}\Phi(|f(x)|)\,dx=\int_{\mathbb{R}^n}\Phi(f^{\mbox{\begin{tiny}$\bigstar$\end{tiny}}}(x))\,dx
=\int_0^{\infty}\Phi(f^*(s))\,ds
\end{equation}
holds for every continuous increasing function
$\Phi:[0,\infty)\to[0,\infty)$.

In the proof of Theorem \ref{main}, we will repeatedly apply
Federer's coarea formula (see, e.g., \cite[p.\ 258]{fed}). We state
here a version which is sufficient for our purposes: If $f:
\mathbb{R}^n \rightarrow \mathbb{R}$ is locally Lipschitz and $g:
\mathbb{R}^n \rightarrow [0,\infty)$ is measurable, then, for any
Borel set $A \subseteq \mathbb{R}$,
\begin{equation} \label{coarea}
\int_{f^{-1}(A)\cap \{|\nabla f| >0\}}g(x)\,dx = \int_A
\int_{f^{-1}\{y\}} \frac{g(x)}{|\nabla f(x)
|}\,d\mathcal{H}^{n-1}(x)\,dy.
\end{equation}

\section{$L^p$ Projection Bodies and the $L^p$ Minkowski Problem}

\setcounter{abschnitt}{3} \reseteqn \alpheqn

In the following we collect the critical ingredients for the proof
of Theorem 1. In the Euclidean setting, the P\'olya--Szeg\"o
principle has the classical isoperimetric inequality at its core. In
the affine setting, the geometric tools are an $L^p$ affine
\linebreak isoperimetric inequality established by the first-named
authors \cite{hs} and the solution to the discrete data case of the
normalized $L^p$ Minkowski problem \cite{hlyz}.

The {\it asymmetric $L^p$ projection body} $\Pi_p^+K$ of $K \in
\mathcal{K}^n_{\mathrm{o}}$, first considered in \cite{Lutwak96}, is
the convex body defined by
\begin{equation} \label{defpipp}
h(\Pi_p^+K,u)^p=\int_{S^{n-1}} (u \cdot v)_+^ph(K,v)^{1-p}\,dS(K,v),
\qquad u \in S^{n-1},
\end{equation}
where $(u \cdot v)_+ = \max\{u \cdot v, 0\}$. The (symmetric) {\it
$L^p$ projection body $\Pi_p K$} of $K \in
\mathcal{K}^n_{\mathrm{o}}$, defined in \cite{LYZ2000b}, is
\[\Pi_pK=\mbox{$\frac{1}{2}$} \cdot \Pi_p^+K+_p \mbox{$\frac{1}{2}$} \cdot  \Pi_p^-K,  \]
where $\Pi_p^-K = \Pi_p^+(-K)$. When $p=1$, asymmetric $L^p$
projection bodies (and symmetric $L^p$ projection bodies) coincide
with the classical projection bodies introduced by Minkowski.

Within the Brunn--Minkowski theory, projection bodies have become a
central notion. They arise naturally in a number of different areas
such as functional analysis, stochastic geometry and geometric
tomography. The fundamental affine isoperimetric inequality which
connects the volume of a convex body with that of its polar
projection body is the {\it Petty projection inequality}
\cite{petty67}. This inequality turned out to be far stronger than
the classical isoperimetric inequality and has led to Zhang's affine
Sobolev inequality \cite{Zhang99}.

In the new $L^p$ Brunn--Minkowski theory, establishing an $L^p$
analog of Petty's projection inequality became a major goal. This
was accomplished for the symmetric $L^p$ projection bodies by
Lutwak, Yang, and Zhang \cite{LYZ2000b} (see also Campi and Gronchi
\cite{Campi:Gronchi02a} for an independent approach): If $K \in
\mathcal{K}_{\mathrm{o}}^n$, then
\begin{equation} \label{lppetty}
V(K)^{n/p-1}V(\Pi_p^*K) \leq
\left(\frac{\kappa_n\kappa_{p-1}}{\kappa_{n+p-2}}\right)^{n/p},
\end{equation}
with equality if and only if $K$ is an ellipsoid centered at the
origin. This \linebreak inequality forms the geometric core of the
affine P\'olya--Szeg\"o principle (\ref{affpolszeg}) of Cianchi et
al. \cite{clyz09}.

Recently the first two authors \cite{hs} established a stronger
$L^p$ Petty projection inequality for asymmetric $L^p$ projection
bodies:

\begin{theorem} \label{hsinequ}
If $p > 1$ and $K \in \mathcal{K}_{\mathrm{o}}^n$, then
\begin{equation} \label{hspetty}
V(K)^{n/p-1}V(\Pi_p^{+,*}K) \leq
\left(\frac{\kappa_n\kappa_{p-1}}{\kappa_{n+p-2}}\right)^{n/p},
\end{equation}
where equality is attained if $K$ is an ellipsoid centered at the
origin.
\end{theorem}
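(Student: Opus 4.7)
The plan is to prove Theorem~\ref{hsinequ} by a Steiner symmetrization / shadow system argument, modeled on the proofs of the symmetric $L^p$ Petty projection inequality~\eqref{lppetty} by Campi--Gronchi and by Lutwak, Yang and Zhang, with the modifications required to handle the asymmetric kernel $(u\cdot v)_+^p$ appearing in~\eqref{defpipp}.

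First, I would verify that $\Pi_p^+$ is $\mathrm{SL}(n)$-covariant, i.e.\ $\Pi_p^+(\phi K)=\phi^{-T}\Pi_p^+K$ for every $\phi\in\mathrm{SL}(n)$, by a direct computation from~\eqref{defpipp} using the transformation rule for the $L^p$ surface area measure $h(K,\cdot)^{1-p}dS(K,\cdot)$. As a consequence, the functional $V(K)^{n/p-1}V(\Pi_p^{+,*}K)$ is invariant under volume-preserving affine maps, so the claimed extremisers form a single orbit on which the functional is constant, and it suffices to compute its value at the Euclidean unit ball $B$. Since $S(B,\cdot)$ is spherical Lebesgue measure and $h(B,\cdot)\equiv 1$, formula~\eqref{defpipp} gives $h(\Pi_p^+B,u)^p=\int_{S^{n-1}}(u\cdot v)_+^p\,dv$, which is independent of $u$; the value is a standard Beta-function integral, and plugging into~\eqref{polvol} produces precisely the constant $(\kappa_n\kappa_{p-1}/\kappa_{n+p-2})^{n/p}$.

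The crux of the argument is then to show that $V(\Pi_p^{+,*}K)$ does not decrease under Steiner symmetrization. Fixing $u\in S^{n-1}$, I would consider the shadow system $\{K_t\}_{t\in[-1,1]}$ with direction $u$ in which $K_1=K$, $K_{-1}$ is the reflection of $K$ across $u^\perp$, and $K_0$ is the Steiner symmetral of $K$. Using~\eqref{polvol} and~\eqref{defpipp} I would argue that the function
\[
t\;\longmapsto\;V(\Pi_p^{+,*}K_t)^{-n/p}
\]
is convex on $[-1,1]$. Along a shadow system, the $L^p$ surface area measure $h(K_t,\cdot)^{1-p}dS(K_t,\cdot)$ depends in an $L^p$-Minkowski-linear fashion on $t$; combining this with the $L^p$ Brunn--Minkowski inequality applied to $h(\Pi_p^+K_t,\cdot)^p$ (viewed as a function of $t$) and with Jensen's inequality on the outer integration over $S^{n-1}$ in~\eqref{polvol} yields the desired convexity. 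A short change-of-variables computation shows that reflection across $u^\perp$ sends $\Pi_p^+K$ to its own reflection across $u^\perp$, so $V(\Pi_p^{+,*}K_1)=V(\Pi_p^{+,*}K_{-1})$; convexity at the midpoint $t=0$ then gives $V(\Pi_p^{+,*}K_0)\geq V(\Pi_p^{+,*}K_1)$, which combined with $V(K_0)=V(K_1)$ is the required monotonicity.

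Iterating Steiner symmetrizations along a dense sequence of hyperplanes through the origin produces convex bodies converging in the Hausdorff metric to a centred ball of the same volume as $K$; continuity of $V$ and of $K\mapsto V(\Pi_p^{+,*}K)$ on $\mathcal{K}_{\mathrm{o}}^n$ then reduces~\eqref{hspetty} to the case of the ball already treated above. The main obstacle will be the convexity step: the asymmetric kernel $(w\cdot v)_+^p$ lacks the $v\leftrightarrow -v$ symmetry exploited in the Campi--Gronchi argument for $\Pi_p$, so the shadow-system computation has to separate the positive and negative hemispheres $\{v:w\cdot v>0\}$ and $\{v:w\cdot v<0\}$ and apply the $L^p$ Brunn--Minkowski machinery in a form that respects this splitting rather than the full even measure $|w\cdot v|^p\,dS(K_t,v)$.
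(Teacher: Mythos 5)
Your reduction steps are fine: the covariance of $\Pi_p^+$ under volume-preserving linear maps, the resulting affine invariance of $V(K)^{n/p-1}V(\Pi_p^{+,*}K)$, the evaluation at the unit ball, and the reduction of \eqref{hspetty} to the monotonicity $V(\Pi_p^{+,*}S_uK)\ge V(\Pi_p^{+,*}K)$ under Steiner symmetrization are all correct, and they match the spirit of the source this paper relies on (the paper does not prove Theorem~\ref{hsinequ} itself; it quotes it from \cite{hs}, where it is indeed established by a Steiner symmetrization argument). The gap is in the only step carrying real content. Your engine for the convexity of $t\mapsto V(\Pi_p^{+,*}K_t)^{-n/p}$ is the assertion that $h(K_t,\cdot)^{1-p}\,dS(K_t,\cdot)$ depends ``$L^p$-Minkowski-linearly'' on $t$ along a shadow system. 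This is unjustified and, as stated, false: shadow systems control support functions, volumes and classical mixed volumes, but not the $L^p$ surface area measure, whose weight $h(K_t,\cdot)^{1-p}$ depends on the location of the origin and varies nonlinearly in $t$. This is precisely why the symmetric inequality \eqref{lppetty} was proved in \cite{LYZ2000b} and \cite{Campi:Gronchi02a} not by symmetrizing $\Pi_p$ directly, but via the $L^p$ Busemann--Petty centroid inequality (whose defining integral is over the body and does behave well under shadow systems) followed by class reduction.

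Moreover, even if your outline produced the natural averaging estimate it is aiming at, namely $h(\Pi_p^+S_uK,w)^p\le\frac12\bigl[h(\Pi_p^+K,w)^p+h(\Pi_p^+K,R_uw)^p\bigr]$ with $R_u$ the reflection in $u^\perp$, the proposed ``Jensen's inequality on the outer integration over $S^{n-1}$'' runs the wrong way: writing $A=h(\Pi_p^+K,w)$, $B=h(\Pi_p^+K,R_uw)$ and using that $h(\Pi_p^+S_uK,\cdot)$ is $R_u$-invariant, convexity of $x\mapsto x^{-n/p}$ gives
\begin{equation*}
2\Bigl(\tfrac{A^p+B^p}{2}\Bigr)^{-n/p}\ \le\ A^{-n}+B^{-n},
\end{equation*}
so pairing $w$ with $R_uw$ in \eqref{polvol} only bounds $V(\Pi_p^{+,*}S_uK)$ from \emph{above} by $V(\Pi_p^{+,*}K)$, the opposite of what is needed. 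The proof in \cite{hs} avoids this loss by a genuinely two-directional estimate: using the overgraph/undergraph representation of $h(\Pi_p^+K,\cdot)^p$ as an integral over $K|u^\perp$ and the joint convexity of $(c,a)\mapsto (c)_+^p\,a^{1-p}$, one shows, roughly, that if $y'+s_1u$ and $y'+s_2u$ both lie in $\Pi_p^{+,*}K$ for some $y'\in u^\perp$, then $y'+\frac{s_1-s_2}{2}\,u\in\Pi_p^{+,*}(S_uK)$; hence every chord of $\Pi_p^{+,*}(S_uK)$ in direction $u$ is at least as long as the corresponding chord of $\Pi_p^{+,*}K$, and Fubini yields $V(\Pi_p^{+,*}S_uK)\ge V(\Pi_p^{+,*}K)$ with no Jensen-type loss. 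Without an ingredient of this kind --- or, alternatively, an asymmetric $L^p$ Busemann--Petty centroid inequality combined with a class-reduction argument --- your outline does not close; your own final paragraph correctly identifies this obstacle but does not resolve it.
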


Although this inequality was formulated in \cite{hs} for dimensions
$n\geq3$, we remark that it also holds true in dimension $n = 2$.
The proof is verbally the same as the one given in \cite{hs}.

It was also shown in \cite{hs} that inequality (\ref{hspetty}), for
$p > 1$, strengthens and directly implies inequality (\ref{lppetty})
of Lutwak, Yang, and Zhang: If $K \in \mathcal{K}_{\mathrm{o}}^n$,
then
\begin{equation*}
V(\Pi_p^* K) \leq V(\Pi_p^{+,*}K).
\end{equation*}
If $p$ is not an odd integer, equality holds precisely for
origin-symmetric $K$.

\vspace{0.3cm}

We turn now to the second tool from the geometry of convex bodies
needed in the proof of Theorem 1. The $L^p$ Minkowski problem,
essentially an elliptic Monge--Amp\`ere PDE, deals with the
existence and the uniqueness of convex bodies with prescribed $L^p$
curvature (see, e.g., \cite{Chou:Wang:06, hlyz, lyz04c}). We will
apply our affine isoperimetric inequality (\ref{hspetty}) to the
bodies occurring as solutions to this problem for $p \geq 1$. Since
the geometric inequality assumes that the convex bodies contain the
origin in their interiors, its application is intricate in the
asymmetric situation. Here, the origin can lie on the boundary of
the bodies occurring as the solution to the $L^p$ Minkowski problem.
For this reason we will have to deal with a normalized version of
the discrete-data case of the $L^p$ Minkowski problem (see
\cite[Theorem 1.1]{hlyz}).

\begin{theorem}\label{minkprob} If $\alpha_1,\ldots,\alpha_k>0$ and $u_1,\ldots,u_k\in S^{n-1}$ are
not contained in a closed hemisphere, then, for any $p> 1$, there
exists a polytope $P \in \mathcal{K}_{\mathrm{o}}^n$ such that
\[V(P)h(P,\cdot)^{p-1}\sum_{j=1}^k\alpha_j\delta_{u_j}=S(P,\cdot).\]
\end{theorem}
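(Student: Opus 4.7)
My plan is to use a variational approach. For $h = (h_1, \ldots, h_k) \in \mathbb{R}_{\geq 0}^k$, let
\[P(h) = \bigcap_{j=1}^k \{x \in \mathbb{R}^n : x \cdot u_j \leq h_j\},\]
which is a (possibly degenerate) compact convex set because the $u_j$ are not contained in a closed hemisphere. The idea is to seek a solution among polytopes of this form by minimizing the functional $\Phi(h) = \sum_{j=1}^k \alpha_j h_j^p$ subject to the constraint $V(P(h)) \geq 1$.

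Existence of a minimizer $h^\ast \in \mathbb{R}_{\geq 0}^k$ follows from standard compactness: any minimizing sequence is bounded (since $\Phi$ is bounded along it and the $\alpha_j$ are positive), so a subsequence converges to some $h^\ast \geq 0$; continuity of $h \mapsto V(P(h))$ gives $V(P(h^\ast)) \geq 1$, and dilating if necessary we may assume $V(P(h^\ast)) = 1$. Write $P^\ast = P(h^\ast)$; the relation $V(P^\ast) = 1 > 0$ forces $P^\ast$ to be full-dimensional.

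The main obstacle, and the heart of the proof, is to establish that $P^\ast$ may be translated so that the origin lies in its interior (equivalently, $h_j^\ast > 0$ for every $u_j$ that is a facet normal of $P^\ast$). Since $V$ is translation invariant while $\Phi$ is not, consider the auxiliary function
\[\Psi(x_0) = \sum_{j=1}^k \alpha_j (h_j^\ast - x_0 \cdot u_j)_+^p,\]
which records the $\Phi$-value obtained after translating $P^\ast$ by $-x_0$. For $p > 1$, $\Psi$ is $C^1$, convex, and coercive on $\mathbb{R}^n$ (coerciveness uses that the $u_j$ do not lie in a closed hemisphere), so it attains a unique minimum at some $x_0^\ast$. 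The minimality of $h^\ast$ forces $\Psi(x_0) \geq \Psi(0)$ for every $x_0 \in P^\ast$. A perturbation analysis---exploiting that $t \mapsto (t)_+^p$ has vanishing derivative at $t = 0$ for $p > 1$, so that bringing a zero coordinate of $h^\ast$ slightly off $0$ costs only $O(\varepsilon^p)$ while the linear effect on the other coordinates dominates---together with the spanning property of the $u_j$, shows $x_0^\ast \in \mathrm{int}(P^\ast)$. After translating $P^\ast$ by $-x_0^\ast$, we obtain a minimizer with origin in its interior and every $h_j^\ast > 0$; the Euler--Lagrange equation in the next step will in turn force each $u_j$ to appear as a facet normal of $P^\ast$ (otherwise $S(P^\ast, \{u_j\}) = 0$ would contradict $\alpha_j (h_j^\ast)^{p-1} > 0$).

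With $P^\ast \in \mathcal{K}_{\mathrm{o}}^n$, the Lagrange multiplier rule applied to $\Phi$ under the volume constraint, combined with the standard identity $\partial V(P(h)) / \partial h_j = S(P(h), \{u_j\})$ (a consequence of Minkowski's formula for polytope volume), yields a common $\lambda > 0$ such that $p \alpha_j (h_j^\ast)^{p-1} = \lambda \, S(P^\ast, \{u_j\})$ for all $j$. Finally, setting $\mu = (p/\lambda)^{1/p}$ and $P = \mu P^\ast$, a routine scaling check using $V(P) = \mu^n$, $h(P, u_j) = \mu h_j^\ast$, and $\mu^p = p/\lambda$ gives
\[S(P, \cdot) = \mu^{n-1} S(P^\ast, \cdot) = \mu^{n-1}\frac{p}{\lambda} \sum_{j=1}^k \alpha_j (h_j^\ast)^{p-1} \delta_{u_j} = V(P)\, h(P,\cdot)^{p-1} \sum_{j=1}^k \alpha_j \delta_{u_j},\]
as required.
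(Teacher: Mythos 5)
Your overall scheme---minimize $\Phi(h)=\sum_j\alpha_jh_j^p$ over $h\geq 0$ subject to $V(P(h))\geq 1$, re-center so that the origin is interior, then apply a Lagrange multiplier with $\partial V/\partial h_j=S(P(h),\{u_j\})$ and rescale---is sound in outline and is essentially the variational route of Hug--Lutwak--Yang--Zhang, which is all the paper itself offers for Theorem 3 (it is quoted from \cite{hlyz} without proof). The compactness step and the final Euler--Lagrange and scaling computations are fine, modulo the easy remark that at a minimizer $h_j^\ast=h(P^\ast,u_j)$ for every $j$ (otherwise lower the redundant coordinate). The genuine gap is in the step you yourself call the heart of the proof: the claim that the minimizer $x_0^\ast$ of $\Psi$ lies in $\mathrm{int}(P^\ast)$. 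Note first that minimality of $h^\ast$ already makes $x_0=0$ a \emph{global} minimizer of $\Psi$, since $\bigl((h_j^\ast-x_0\cdot u_j)_+\bigr)_j$ is feasible for every $x_0\in\mathbb{R}^n$. But at a minimizer of the convex, $C^1$ function $\Psi$ the gradient vanishes, i.e.\ $\sum_{j}\alpha_j(h_j^\ast-x_0\cdot u_j)_+^{p-1}u_j=0$, where only the terms with positive base contribute; so there is no direction of linear decrease whatsoever, and the asserted mechanism ``zero coordinates cost $O(\varepsilon^p)$ while the linear effect on the other coordinates dominates'' has nothing to dominate with. Interiority genuinely does not follow from the properties of $\Psi$ you list: take $n=2$, unit vectors $u_2,u_3,u_4$ at mutual angles $120^\circ$ with $\alpha_2=\alpha_3=\alpha_4$ and $h_2=h_3=h_4=1$, and any $u_1$ with $h_1=0$. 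Then $\Psi$ is convex, coercive, $C^1$, the $u_j$ are not in a closed hemisphere, and $\Psi(x_0)\geq\Psi(0)$ for all $x_0$, yet the minimum of $\Psi$ is attained at the origin, which lies on the boundary of $P(h)$.

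To close the gap you must use the constrained minimality of $h^\ast$ again, and not only through the inequality $\Psi\geq\Psi(0)$. One way that works: if $0\in\partial P^\ast$, then $0$ lies on some facet of $P^\ast$; its outer normal is some $u_{j_0}$ with $h_{j_0}^\ast=h(P^\ast,u_{j_0})=0$ and $S(P^\ast,\{u_{j_0}\})>0$. Replace $h_{j_0}^\ast=0$ by $\varepsilon>0$: this costs $\alpha_{j_0}\varepsilon^p$ in $\Phi$ but gains volume $S(P^\ast,\{u_{j_0}\})\,\varepsilon+o(\varepsilon)$; rescaling the perturbed $h$ by the factor needed to restore $V=1$ multiplies $\Phi$ by $1-c\varepsilon+o(\varepsilon)$ with $c>0$, so the net change in $\Phi$ is $-c\,\Phi(h^\ast)\varepsilon+\alpha_{j_0}\varepsilon^p+o(\varepsilon)<0$ for small $\varepsilon$, since $\varepsilon^p=o(\varepsilon)$ when $p>1$ --- contradicting minimality. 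Thus every $h_j^\ast>0$ and the origin is interior (this is where $p>1$ enters, exactly as in \cite{hlyz}); your Lagrange and scaling steps then go through as written. In short: right strategy, but the decisive interiority step needs the volume-exchange perturbation in $h$ (with a compensating dilation), not a perturbation of the translation parameter in $\Psi$, and this is precisely the subtlety the paper emphasizes when it notes that the origin can lie on the boundary of solutions to the $L^p$ Minkowski problem.
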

Here, $\delta_{u_j}$ denotes the probability measure with unit point
mass at $u_j \in  S^{n-1}$.

Two more auxiliary results \cite[Lemmas 2.2 \& 2.3]{lyz04c}
regarding the convex bodies which occur as solutions to the volume
normalized $L^p$ Minkowski problem will also be needed: Let $\mu$ be
a positive Borel measure on $S^{n-1}$, and let $K \in \mathcal{K}^n$
contain the origin. Suppose that
\[V(K)h(K,\cdot)^{p-1}\mu=S(K,\cdot),  \]
and that for some constant $c > 0$,
\[\int_{S^{n-1}}(u\cdot
v)_+^p\,d\mu(v)\geq\frac{n}{c^p} \quad \mbox{for every }u \in
S^{n-1}. \] Then
\begin{equation}\label{volineq1}
V(K)\geq\kappa_n\left(\frac{n}{\mu(S^{n-1})}\right)^{n/p} \qquad
\mbox{and} \qquad K \subset cB_n,
\end{equation}
where $B_n$ denotes the Euclidean unit ball in $\mathbb{R}^n$.

\section{Level sets and asymmetric $L^p$ projection bodies}

\setcounter{abschnitt}{4} \reseteqn \alpheqn

In order to apply the crucial $L^p$ affine isoperimetric inequality
(\ref{hspetty}) in the proof of our main result, it will be
necessary to rewrite $L^p$ gradient integrals over level sets in
terms of $L^p$ mixed volumes. This is done by constructing a family
of convex bodies containing the origin in their interiors by solving
a family of $L^p$ Minkowski problems. In \cite{clyz09}, this was
done by using the normalized {\it even} $L^p$ Minkowski problem. In
the asymmetric situation, we have to deal with the solutions to the
general $L^p$ Minkowski problem. Here, the bodies can contain the
origin in their boundaries. Therefore, we will associate a family of
convex polytopes to a given function which are obtained from the
solution to the discrete-data case of the volume normalized $L^p$
Minkowski problem. This ensures that the polytopes contain the
origin in their interiors and allows us to apply Theorem
\ref{hsinequ}.

We denote by $C_0^{\infty}(\mathbb{R}^n)$ the space of infinitely
differentiable functions on $\mathbb{R}^n$ with compact support. If
$f \in C_0^{\infty}(\mathbb{R}^n)$, then the level set
\[[f]_t:=\{x \in \mathbb{R}^n: |f(x)| \geq t\}  \]
is compact for every $0 < t < \|f\|_{\infty}$. By Sard's theorem,
for almost every $t \in (0,\|f\|_{\infty})$, the boundary
\[\partial [f]_t = \{x \in \mathbb{R}^n: |f(x)| = t\}  \]
of $[f]_t$ is a smooth $(n-1)$-dimensional submanifold of
$\mathbb{R}^n$ with everywhere nonzero normal vector $\nabla f(x)$.

\begin{lemma} \label{l1} Suppose that $p>1$ and $f\in C_0^{\infty}(\mathbb{R}^n)$.
Then, for almost every $t \in (0,\|f\|_{\infty})$, there exists a
sequence of convex polytopes $P_k^t \in \mathcal{K}_{\mathrm{o}}^n$,
$k\in\mathbb{N}$, such that
\[\lim_{k\to\infty}P_k^t=K_f^t\in\mathcal{K}^n\]
and
\begin{equation}\label{volpkt}
\frac{1}{n}\int_{\partial [f]_t}h(K^t_f,\nabla f(x))^p|\nabla
f(x)|^{-1}\,d\mathcal{H}^{n-1}(x)=1.
\end{equation}
Moreover, there exists a convex body $L_f^t \in
\mathcal{K}_{\mathrm{o}}^n$ such that
\[\lim_{k \to \infty} V(P_k^t)^{-1/p}\Pi_p^+P_k^t= L_f^t.\]
\end{lemma}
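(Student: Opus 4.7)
The plan is to construct each $P_k^t$ as a solution of the discrete $L^p$ Minkowski problem (Theorem \ref{minkprob}) whose data discretize a natural Borel measure on $S^{n-1}$ built from the level surface $\partial [f]_t$, and then to extract a Hausdorff-convergent subsequence using the estimates (\ref{volineq1}) and Blaschke selection.

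For almost every $t \in (0, \|f\|_\infty)$, Sard's theorem ensures that $\partial [f]_t$ is a smooth closed hypersurface on which $|\nabla f|$ does not vanish. Writing $\tilde\nu(x) = \nabla f(x)/|\nabla f(x)|$, let $\mu^t$ denote the pushforward under $\tilde\nu$ of the finite Borel measure $|\nabla f|^{p-1}\,d\mathcal{H}^{n-1}$ carried by $\partial [f]_t$. Using positive $1$-homogeneity of the support function, the integral in (\ref{volpkt}) transforms to
\begin{equation*}
\frac{1}{n}\int_{S^{n-1}} h(K_f^t, u)^p\, d\mu^t(u).
\end{equation*}
A standard Gauss map argument applied to the compact sets $\{f \geq t\}$ and $\{f \leq -t\}$ shows that $\tilde\nu$ is surjective onto $S^{n-1}$, whence $\mu^t$ has full support on $S^{n-1}$ and, in particular, is not concentrated on any closed hemisphere.

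I would approximate $\mu^t$ weakly by discrete measures $\mu^t_k = \sum_j \alpha_{j,k}\delta_{u_{j,k}}$ whose finite supports also avoid every closed hemisphere (a sufficiently fine partition of $S^{n-1}$ suffices). Theorem \ref{minkprob} then produces polytopes $P_k^t \in \mathcal{K}_{\mathrm{o}}^n$ satisfying $V(P_k^t)\,h(P_k^t,\cdot)^{p-1}\mu^t_k = S(P_k^t,\cdot)$. Combining this with (\ref{intrepsp}) and (\ref{vol1}) applied at $K = L = P_k^t$ yields the discrete analogue of (\ref{volpkt}):
\begin{equation*}
\frac{1}{n}\int_{S^{n-1}} h(P_k^t, u)^p\, d\mu^t_k(u) = 1.
\end{equation*}
Because $\mu^t$ has full support, the continuous function $u \mapsto \int_{S^{n-1}}(u\cdot v)_+^p\, d\mu^t(v)$ attains a positive minimum on $S^{n-1}$; weak convergence of $\mu^t_k$ then propagates this to a uniform positive lower bound for the corresponding integrals against $\mu^t_k$ when $k$ is large. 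The estimates (\ref{volineq1}) thus confine $P_k^t$ to a fixed ball and bound $V(P_k^t)$ from below, so Blaschke selection extracts a subsequence converging to some $K_f^t \in \mathcal{K}^n$. Passing to the limit in the displayed identity, via uniform convergence of support functions together with weak convergence $\mu^t_k \to \mu^t$, establishes (\ref{volpkt}).

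Finally, substituting the $L^p$ Minkowski relation satisfied by $P_k^t$ into the definition (\ref{defpipp}) of $\Pi_p^+ P_k^t$ yields
\begin{equation*}
h\bigl(V(P_k^t)^{-1/p}\,\Pi_p^+ P_k^t,\, u\bigr)^p = \int_{S^{n-1}} (u\cdot v)_+^p\, d\mu^t_k(v),
\end{equation*}
and the right-hand side converges, uniformly in $u \in S^{n-1}$, to $\int_{S^{n-1}}(u\cdot v)_+^p\, d\mu^t(v)$. Because $\mu^t$ is not concentrated on any closed hemisphere, this limit is continuous and strictly positive on $S^{n-1}$, hence the $p$th power of the support function of a unique body $L_f^t \in \mathcal{K}_{\mathrm{o}}^n$; uniform convergence of positive support functions then gives Hausdorff convergence. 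The principal obstacle I anticipate is securing the hemisphere-avoidance condition during the weak approximation $\mu^t_k \to \mu^t$ while simultaneously extracting, via (\ref{volineq1}), uniform bounds on the polytopes $P_k^t$ strong enough to pass to the limit --- despite the fact that the origin may end up on $\partial K_f^t$, so that $K_f^t$ need not lie in $\mathcal{K}_{\mathrm{o}}^n$.
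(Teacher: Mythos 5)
Your argument is correct and follows essentially the same route as the paper: the same pushforward measure $\mu^t$ of $|\nabla f|^{p-1}\,d\mathcal{H}^{n-1}$ under the Gauss-type map, discretization by measures not concentrated on closed hemispheres, the polytopes from Theorem \ref{minkprob}, the normalization identity via (\ref{vol1}) and (\ref{intrepsp}), compactness from (\ref{volineq1}) and Blaschke selection, and identification of $L_f^t$ through $\int_{S^{n-1}}(u\cdot v)_+^p\,d\mu^t(v)$. The one point the paper treats more carefully is why this limit function is the $p$th power of a support function --- it invokes Minkowski's integral inequality, since continuity and strict positivity alone do not give sublinearity; in your setup this is still fine because the limit is a uniform limit of support functions of the bodies $V(P_k^t)^{-1/p}\Pi_p^+P_k^t$, hence itself sublinear.
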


\begin{proof}  Let $t$ be chosen such that $\partial [f]_t$ is
a smooth manifold with everywhere nonzero normal vector $\nabla
f(x)$ and denote by $\nu(x)=\nabla f(x)/|\nabla f(x)|$ the unit
normal of $\partial[f]_t$ at $x$.

Let $\mu^t$ be the finite positive Borel measure on $S^{n-1}$
satisfying
\begin{equation}\label{defmu}
\int_{S^{n-1}}g(v)\,d\mu^t(v)=\int_{\partial[f]_t}g(\nu(x))|\nabla
f(x)|^{p-1}\,d\mathcal{H}^{n-1}(x)
\end{equation}
for every $g \in C(S^{n-1})$. From
\begin{equation*}\label{surnu}
\{\nu(x):\,\,x\in\partial [f]_t\}=S^{n-1},
\end{equation*}
it follows that for any $u \in S^{n-1}$,
\[\int_{S^{n-1}}(u\cdot v)_+\,d\mu^t(v)=\int_{\partial [f]_t}(u \cdot \nu(x))_+|\nabla f(x)|^{p-1}\,d\mathcal{H}^{n-1}(x)>0.\]
Consequently, the measure $\mu^t$ is not concentrated in a closed
hemisphere.

We can find a sequence $\mu_k^t$, $k\in\mathbb{N}$, of discrete
measures on $S^{n-1}$ whose supports are not contained in a closed
hemisphere and such that $\mu_k^t$ converges weakly to $\mu^t$ as $k
\rightarrow \infty$ (see, e.g., \cite[pp.\ 392-3]{Schneider:CB}). By
Theorem 3, for each $k \in \mathbb{N}$, there exists a polytope
$P_k^t \in \mathcal{K}_{\mathrm{o}}^n$ such that
\begin{equation}\label{defmuk}
V(P_k^t)h(P_k^t,\cdot)^{p-1}\mu_k^t=S(P_k^t,\cdot).
\end{equation}
From definition (\ref{defpipp}), relation (\ref{defmuk}) and the
weak convergence of the measures $\mu_k^t$ it follows that for every
$u \in S^{n-1}$,
\begin{equation} \label{form2}
h\left(V(P_k^t)^{-1/p}\Pi_p^+P_k^t,u\right)^p=\int_{S^{n-1}}(u\cdot
v)_+^p\,d\mu_k^t(v)\longrightarrow\int_{S^{n-1}}(u\cdot
v)_+^p\,d\mu^t(v)>0.
\end{equation}
Since pointwise convergence of support functions implies uniform
convergence (see, e.g., \cite[Theorem 1.8.12]{Schneider:CB}), there
exists a $c>0$ such that for all $k \in \mathbb{N}$,
\begin{equation} \label{prop2}
\int_{S^{n-1}}(u\cdot v)_+^p\,d\mu_k^t(v)>c, \quad\textnormal{for
every }u\in S^{n-1}.
\end{equation}
From (\ref{defmuk}), (\ref{prop2}) and (\ref{volineq1}), it follows
that the sequence $P_k^t$, $k\in\mathbb{N}$, is bounded and that the
volumes $V(P_k^t)$ are also bounded from below. By the Blaschke
selection theorem (see, e.g., \cite[Theorem 1.8.6]{Schneider:CB}),
we can therefore select a subsequence of the $P_k^t$ converging to a
convex body $K^t_f$. After relabeling (if necessary) we may assume
that $\lim_{k\rightarrow\infty}P_k^t=K^t_f$. By definition
(\ref{defmu}), we have
\[\frac{1}{n}\int_{\partial[f]_t}h(K^t_f,\nabla f(x))^p|\nabla
f(x)|^{-1}\,d\mathcal{H}^{n-1}(x)=\frac{1}{n}\int_{S^{n-1}}h(K_f^t,v)^p\,d\mu^t(v).\]
Thus, from the uniform convergence of the support functions
$h(P_k^t,\cdot)$ and the weak convergence of the measures $\mu_k^t$
to the finite measure $\mu^t$, we obtain
\[\lim_{k\to\infty}\frac{1}{n}\int_{S^{n-1}}h(P_k^t,v)^p\,d\mu_k^t(v)=
\frac{1}{n}\int_{\partial[f]_t}h(K^t_f,\nabla f(x))^p|\nabla
f(x)|^{-1}\,d\mathcal{H}^{n-1}(x).\] By (\ref{vol1}),
(\ref{intrepsp}), and relation (\ref{defmuk}), we have for each
$k\in\mathbb{N}$,
\[\frac{1}{n}\int_{S^{n-1}}h(P_k^t,v)^p\,d\mu_k^t(v)=1,\]
which proves (\ref{volpkt}). Finally, we define $h(L_f^t,\cdot)$ by
\begin{equation} \label{defhft}
h(L_f^t,u)^p=\int_{\partial [f]_t}(u\cdot\nabla f(x))_+^p|\nabla
f(x)|^{-1}\,d\mathcal{H}^{n-1}(x),\qquad u\in S^{n-1}.
\end{equation}
From Minkowski's integral inequality, it follows that
$h(L_f^t,\cdot)$ is the support function of a compact convex set.
From definition (\ref{defmu}) and (\ref{form2}), we deduce that
$L_f^t \in \mathcal{K}_{\mathrm{o}}^n$ and that $\lim_{k \to \infty}
V(P_k^t)^{-\frac{1}{p}}\Pi_p^+P_k^t= L_f^t$. \qed
\end{proof}

The following lemma is a special case of Lemma \ref{l1} for
functions with rotational symmetry arising from symmetric
rearrangement.

\begin{lemma} \label{symmlemm} Suppose that $p> 1$ and $f\in C_0^{\infty}(\mathbb{R}^n)$. Then,
for almost every $t\in (0,\|f\|_{\infty})$, there exists a real
number $c_f^t>0$ such that
\begin{equation}\label{defhft2}
h\left(V(c_f^tB_n)^{-1/p}\Pi_p^+(c_f^tB_n),u\right)^p=\int_{\partial[f^{\mbox{\begin{tiny}$\bigstar$\end{tiny}}}]_t}\frac{(u\cdot\nabla
f^{\mbox{\begin{tiny}$\bigstar$\end{tiny}}}(x))_+^p}{|\nabla
f^{\mbox{\begin{tiny}$\bigstar$\end{tiny}}}(x)|}\,d\mathcal{H}^{n-1}(x),
\end{equation}
for every $u\in S^{n-1}$, and
\begin{equation}\label{volpkt2}
\frac{1}{n}\int_{\partial
[f^{\mbox{\begin{tiny}$\bigstar$\end{tiny}}}]_t}h\left(c_f^t
B_n,\nabla
f^{\mbox{\begin{tiny}$\bigstar$\end{tiny}}}(x)\right)^p|\nabla
f^{\mbox{\begin{tiny}$\bigstar$\end{tiny}}}(x)|^{-1}\,d\mathcal{H}^{n-1}(x)=1.
\end{equation}\end{lemma}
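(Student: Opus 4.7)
The proof is a direct verification using the rotational symmetry of $f^{\bigstar}$. Write $f^{\bigstar}(x) = \phi(|x|)$ for a non-increasing function $\phi$. Because $f \in C_0^{\infty}(\mathbb{R}^n)$, the rearrangement $f^{\bigstar}$ is locally Lipschitz, and for almost every $t \in (0, \|f\|_{\infty})$ the level set $\partial [f^{\bigstar}]_t$ is a sphere $r_t S^{n-1}$ on which $\phi'(r_t)$ exists and is nonzero. On this sphere $|\nabla f^{\bigstar}(x)| \equiv a_t := |\phi'(r_t)|$ and, since $\phi$ is decreasing, $\nabla f^{\bigstar}(x) = -a_t \, x / |x|$.

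The plan is to make the ansatz $K_{f^{\bigstar}}^t = c_f^t B_n$ for a suitable $c_f^t > 0$ and verify both identities by direct computation. Parametrize $\partial [f^{\bigstar}]_t$ by $x = r_t v$, $v \in S^{n-1}$, so that $d\mathcal{H}^{n-1}(x) = r_t^{n-1}\,dv$. Using $h(cB_n, y) = c|y|$, the left-hand side of (\ref{volpkt2}) reduces to $c^p \, a_t^{p-1} r_t^{n-1} \kappa_n$, which equals $1$ precisely when
\[ c_f^t := \bigl(\kappa_n \, a_t^{p-1} r_t^{n-1}\bigr)^{-1/p}. \]
For (\ref{defhft2}), the definition (\ref{defpipp}) together with $dS(cB_n, v) = c^{n-1}\,dv$ yields
\[ h\bigl(V(cB_n)^{-1/p} \Pi_p^+(cB_n), u\bigr)^p = c^{-p} \kappa_n^{-1} \int_{S^{n-1}} (u \cdot v)_+^p\,dv. \]
On the other hand, inserting $\nabla f^{\bigstar}(x) = -a_t v$ and applying the antipodal substitution $v \mapsto -v$ on $S^{n-1}$ shows that the right-hand side of (\ref{defhft2}) equals $a_t^{p-1} r_t^{n-1} \int_{S^{n-1}} (u \cdot v)_+^p\,dv$. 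With the same $c = c_f^t$ as above, the two expressions agree.

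The main subtlety is the sign of $\nabla f^{\bigstar}$: since $f^{\bigstar}$ is radially decreasing, the gradient points inward, and the asymmetric integrand $(u \cdot v)_+^p$ appearing in (\ref{defpipp}) is not centrally symmetric in $v$. The antipodal substitution on $S^{n-1}$, which preserves spherical Lebesgue measure, is what reconciles the two expressions. One could alternatively deduce the lemma from Lemma \ref{l1} by observing that the measure $\mu^{\bigstar, t}$ defined via (\ref{defmu}) is a scalar multiple of spherical Lebesgue measure and invoking uniqueness of the $L^p$ Minkowski problem for $p > 1$, but the direct verification above is shorter and avoids that appeal to uniqueness.
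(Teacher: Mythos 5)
Your proof is correct and follows essentially the same route as the paper: the authors likewise take $c_f^t=\bigl(\kappa_n\,|\nabla f^{\mbox{\begin{tiny}$\bigstar$\end{tiny}}}|^{p-1}r_t^{n-1}\bigr)^{-1/p}$ and verify (\ref{defhft2}) and (\ref{volpkt2}) by direct computation on the sphere $r_tS^{n-1}$, the only cosmetic difference being that they push the surface measure forward under $x\mapsto \nu_*(x)$ to spherical Lebesgue measure (which absorbs your explicit antipodal substitution via its antipodal invariance).
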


\begin{proof} For almost every $t\in (0,\|f\|_{\infty})$, the set $\partial[f^{\mbox{\begin{tiny}$\bigstar$\end{tiny}}}]_t$
is the boundary of a ball of radius $r_t$ with nonvanishing normal
$\nabla f^{\mbox{\begin{tiny}$\bigstar$\end{tiny}}}$. Note that in
this case, $|\nabla f^{\mbox{\begin{tiny}$\bigstar$\end{tiny}}}|$ is
in fact constant on
$\partial[f^{\mbox{\begin{tiny}$\bigstar$\end{tiny}}}]_t$. Define
the real number $c_f^t$ by
\[c_f^t=\left(\kappa_n^{-1}|\nabla f^{\mbox{\begin{tiny}$\bigstar$\end{tiny}}}|^{1-p}r_t^{1-n}\right)^{1/p}.\]
We write $\nu_*(x)=\nabla
f^{\mbox{\begin{tiny}$\bigstar$\end{tiny}}}(x)/|\nabla
f^{\mbox{\begin{tiny}$\bigstar$\end{tiny}}}(x)|$ for the unit normal
vector of $\partial[f^{\mbox{\begin{tiny}$\bigstar$\end{tiny}}}]_t$.
Since for every $g\in C(S^{n-1})$,
\begin{equation*}
\int_{S^{n-1}}g(v)\,d\mathcal{H}^{n-1}(v)=r_t^{1-n}\int_{\partial[
f^{\mbox{\begin{tiny}$\bigstar$\end{tiny}}}]_t}g(\nu_*(x))\,d\mathcal{H}^{n-1}(x),
\end{equation*}
the definition of asymmetric $L^p$ projection bodies (\ref{defpipp})
yields, for $u \in S^{n-1}$,
\begin{equation*}
h\left(V(c_f^tB_n)^{-1/p}\Pi_p^+(c_f^tB_n),u\right)^p
=\frac{r_t^{1-n}}{(c_f^t)^{p}\kappa_n}\int_{\partial[f^{\mbox{\begin{tiny}$\bigstar$\end{tiny}}}]_t}(u\cdot
\nu_*(x))_+^p\, d\mathcal{H}^{n-1}(x).
\end{equation*}
Thus, we obtain (\ref{defhft2}) from the definitions of $c_f^t$ and
$\nu_*(x)$. Finally, we have
\begin{equation*}
\frac{1}{n}\int_{\partial[f^{\mbox{\begin{tiny}$\bigstar$\end{tiny}}}]_t}\!\frac{h(c_f^t
B^n,\nabla
f^{\mbox{\begin{tiny}$\bigstar$\end{tiny}}}(x))^p}{|\nabla
f^{\mbox{\begin{tiny}$\bigstar$\end{tiny}}}(x)|}\,d\mathcal{H}^{n-1}(x)
=\frac{(c_f^t)^p}{n}\int_{\partial[f^{\mbox{\begin{tiny}$\bigstar$\end{tiny}}}]_t}\!|\nabla
f^{\mbox{\begin{tiny}$\bigstar$\end{tiny}}}(x)|^{p-1}\,d\mathcal{H}^{n-1}(x),
\end{equation*}
which yields (\ref{volpkt2}) by the definition of $c_f^t$. \qed
\end{proof}

\section{Proof of the main result}

\setcounter{abschnitt}{5} \reseteqn \alpheqn

We are now in a position to prove our main result. The approach we
use to establish Theorem 1 is based on techniques developed in
\cite{LYZ2002b}.

Before we begin, we want to point out that the asymmetric affine
$L^p$ energy $\mathcal{E}_p^+(f)$ is well defined. This follows from
the fact that $\|\mathrm{D}_u^+ f\|_p$ is positive for each $u \in
S^{n-1}$ and every nontrivial $f \in W^{1,p}(\mathbb{R}^n)$ (see
\cite[Lemma 2]{hs2}).

\vspace{0.3cm}

{\it Proof of Theorem 1} The statement that
$f^{\mbox{\begin{tiny}$\bigstar$\end{tiny}}} \in
W^{1,p}(\mathbb{R}^n)$ whenever $f \in W^{1,p}(\mathbb{R}^n)$ is a
classical fact (see, e.g., \cite{PolSzeg51}). In order to prove
inequality (\ref{hspolszeg}), let us first assume that $p > 1$ and
that $f \in C_0^{\infty}(\mathbb{R}^n)$. Clearly, we may also assume
that $f$ is not identically zero. An application of the coarea
formula (\ref{coarea}) shows that
\begin{equation} \label{duplus}
\|\mathrm{D}^+_u f\|_p^p=\int_0^{\|f\|_{\infty}}\int_{\partial
[f]_t}\frac{(u\cdot\nabla f(x))_+^p}{|\nabla
f(x)|}\,d\mathcal{H}^{n-1}(x)\,dt.
\end{equation}
By Lemma \ref{l1} and (\ref{defhft}), there exists a convex body
$L_f^t \in \mathcal{K}_{\mathrm{o}}^n$ such that
\[\mathcal{E}_p^+(f)^p=2c_{n,p}^p
\left(\int_{S^{n-1}}\left(\int_0^{\|f\|_{\infty}}h(L_f^t,u)^p\,dt\right)^{-n/p}\,du\right)^{-p/n}.\]
Since $h(L_f^t,\cdot)$ is positive, we can apply Minkowski's
inequality for integrals (see, e.g., \cite[p.\ 148]{HLP}), to obtain
\[\mathcal{E}_p^+(f)^p \geq 2c_{n,p}^p
\int_0^{\|f\|_{\infty}}\left(\int_{S^{n-1}}h(L_f^t,u)^{-n}\,du\right)^{-p/n}\,dt.\]
Hence, the volume formula (\ref{polvol}) yields
\begin{equation} \label{est1}
\mathcal{E}_p^+(f)^p \geq
2c_{n,p}^p\int_0^{\|f\|_{\infty}}\left(nV(L_f^{t,*})\right)^{-p/n}\,dt.
\end{equation}
By Lemma \ref{l1}, there exists a sequence of polytopes $P_k^t \in
\mathcal{K}_{\mathrm{o}}^n$ such that
\[ \lim_{k \to \infty} P_k^t=K_f^t \in \mathcal{K}^n \qquad \mbox{and} \qquad \lim_{k \to \infty}
V(P_k^t)^{-1/p}\Pi_p^+P_k^t= L_f^t.\] Thus, an application of
Theorem \ref{hsinequ} shows that
\begin{equation} \label{est2}
(nV(L_f^{t,*}))^{-p/n}=\lim_{k \rightarrow
\infty}(nV(P_k^t)^{n/p}V(\Pi_p^{+,*}P_k^t))^{-p/n} \geq
e_{n,p}V(K_f^t)^{-p/n},
\end{equation}
where
\[e_{n,p}=\frac{\kappa_{n+p-2}}{n^{p/n}\kappa_n\kappa_{p-1}} .\] From (\ref{est1})
and (\ref{est2}), we deduce
\begin{equation} \label{inequkette1}
\mathcal{E}_p^+(f)^p \geq
n\kappa_n^{p/n}\int_0^{\|f\|_{\infty}}V(K_f^t)^{-p/n}\,dt.
\end{equation}
By (\ref{volpkt}) and H\"older's integral inequality, we have
\begin{equation*}
\left(\int_{\partial [f]_t}|\nabla
f(x)|^{-1}\,d\mathcal{H}^{n-1}(x)\right)^{(p-1)/p} \geq n^{1-1/p}\,
V_1([f]_t,K_f^t),
\end{equation*}
where we have used representation (\ref{intrep}) for the mixed
volume $V_1([f]_t,K_f^t)$. From the Minkowski inequality
(\ref{extmink}), we deduce further that
\begin{equation} \label{absch1}
\left(\int_{\partial [f]_t}|\nabla
f(x)|^{-1}\,d\mathcal{H}^{n-1}(x)\right)^{(p-1)/p} \geq n^{1-1/p}\,
\mu_f(t)^{(n-1)/n}V(K_f^t)^{1/n}.
\end{equation}
By the coarea formula (\ref{coarea}), we have for almost every $t$,
\begin{equation}\label{coabl}
\mu_f(t)=V([f]_t \cap \{\nabla
f=o\})+\int_t^{\|f\|_{\infty}}\int_{\partial [f]_s}|\nabla
f(x)|^{-1}\,d\mathcal{H}^{n-1}(x)\,ds.
\end{equation}
Since $\mu_f$ is the sum of two nonincreasing functions, we obtain
\begin{equation} \label{absch17}
-\mu_f(t)'\geq\int_{\partial[f]_t}|\nabla
f(x)|^{-1}\,d\mathcal{H}^{n-1}(x)
\end{equation}
for almost every $t$.

Combining (\ref{absch1}) and (\ref{absch17}), yields the estimate
\begin{equation}\label{absch2}
V(K_f^t)^{-p/n}\geq
n^{p-1}\frac{\mu_f(t)^{p(n-1)/n}}{(-\mu_f'(t))^{p-1}}.
\end{equation}
Thus, by (\ref{inequkette1}) and (\ref{absch2}), we obtain
\[\mathcal{E}_p^+(f)^p\geq n^p\kappa_n^{p/n}\int_0^{\|f\|_{\infty}}\frac{\mu_f(t)^{p(n-1)/n}}{(-\mu_f'(t))^{p-1}}\,dt.\]
It remains to show that
\begin{equation} \label{starequal}
\mathcal{E}_p^+(f^{\mbox{\begin{tiny}$\bigstar$\end{tiny}}})^p=
 n^p\kappa_n^{p/n}\int_0^{\|f\|_{\infty}}\frac{\mu_f(t)^{p(n-1)/n}}{(-\mu_f'(t))^{p-1}}\,dt.
\end{equation}
By (\ref{supnorm}), (\ref{defhft2}), and (\ref{duplus}), we have
\begin{eqnarray*}
\mathcal{E}_p^+(f^{\mbox{\begin{tiny}$\bigstar$\end{tiny}}})^p=2c_{n,p}^p
\left(\int_{S^{n-1}}\left(\int_0^{\|f\|_{\infty}}V(B_f^t)^{-1}h(\Pi_p^+B_f^t,u)^p
dt\right)^{-n/p} du\right)^{-p/n},
\end{eqnarray*}
where $B_f^t$ denotes the ball $c_f^tB_n$ whose existence is
guaranteed by Lemma \ref{symmlemm}. Since $\Pi_p^+B_f^t$ is a ball,
$h(\Pi_p^+B_f^t,\cdot)$ is a constant function on the sphere. Thus,
we obtain as in the first part of the proof,
\begin{equation} \label{inequkette3}
\mathcal{E}_p^+(f^{\mbox{\begin{tiny}$\bigstar$\end{tiny}}})^p =
n\kappa_n^{p/n}\int_0^{\|f\|_{\infty}}V(B_f^t)^{-p/n}\,dt.
\end{equation}
From (\ref{volpkt2}), Minkowski's inequality (\ref{extmink}), and
the fact that $[f^{\mbox{\begin{tiny}$\bigstar$\end{tiny}}}]_t$ and
$B_f^t$ are dilates, we have for almost every $t$ on one hand
\begin{equation*}
\left(\int_{\partial[f^{\mbox{\begin{tiny}$\bigstar$\end{tiny}}}]_t}|\nabla
f^{\mbox{\begin{tiny}$\bigstar$\end{tiny}}}(x)|^{-1}\,d\mathcal{H}^{n-1}(x)\right)^{(p-1)/p}
=
n^{1-1/p}\mu_{f^{\mbox{\begin{tiny}$\bigstar$\end{tiny}}}}(t)^{(n-1)/n}V(B_f^t)^{1/n}
\end{equation*}
and, by (\ref{coabl}) and \cite[Lemma 2.4 \& 2.6]{cianfus02}, on the
other hand
\[-\mu_{f^{\mbox{\begin{tiny}$\bigstar$\end{tiny}}}}(t)'=\int_{\partial[f^{\mbox{\begin{tiny}$\bigstar$\end{tiny}}}]_t}|\nabla
f^{\mbox{\begin{tiny}$\bigstar$\end{tiny}}}(x)|^{-1}\,d\mathcal{H}^{n-1}(x).\]
Hence, the equimeasurability of $f$ and
$f^{\mbox{\begin{tiny}$\bigstar$\end{tiny}}}$ yields
\[V(B_f^t)^{-p/n}= n^{p-1}\frac{\mu_f(t)^{p(n-1)/n}}{(-\mu_f'(t))^{p-1}}.\]
Combining this with (\ref{inequkette3}) proves (\ref{starequal}).
Consequently, we have
\begin{equation} \label{inequglatt}
\mathcal{E}_p^+(f^{\mbox{\begin{tiny}$\bigstar$\end{tiny}}}) \leq
\mathcal{E}_p^+(f)
\end{equation}
for every $p > 1$ and every $f \in C_0^{\infty}(\mathbb{R}^n)$.
Clearly, the case $p = 1$ of inequality (\ref{inequglatt}) can be
obtained by using a limiting argument as $p \rightarrow 1$.

In order to establish inequality (\ref{inequglatt}) for an arbitrary
$f \in W^{1,p}(\mathbb{R}^n)$ whose \linebreak support has positive
measure, consider a sequence $f_k \in C_0^{\infty}(\mathbb{R}^n)$,
$k\in\mathbb{N}$, converging to $f$ in $W^{1,p}(\mathbb{R}^n)$.
Then, for every $k \in \mathbb{N}$,
\begin{equation} \label{inequ117}
\mathcal{E}_p^+(f_k^{\mbox{\begin{tiny}$\bigstar$\end{tiny}}}) \leq
\mathcal{E}_p^+(f_k).
\end{equation}
By Minkowski's integral inequality, $h_f(u):= \|\mathrm{D}_u^+
f\|_p$, $u \in S^{n-1}$, is a support function of a convex body for
every $f \in W^{1,p}(\mathbb{R}^n)$. Thus, the pointwise convergence
$\|\mathrm{D}_u^+ f_k\|_p\to\|\mathrm{D}_u^+ f\|_p$ on $S^{n-1}$,
implies in fact that $\|\mathrm{D}_u^+ f_k\|_p$ converges to
$\|\mathrm{D}_u^+ f\|_p$ uniformly (see, e.g., \cite[Theorem
1.8.12]{Schneider:CB}). Moreover, since $\|\mathrm{D}_u^+ f\|_p$ is
strictly positive on $S^{n-1}$ (see \cite[Lemma 2]{hs}), also
$\|\mathrm{D}_u^+ f_k\|_p^{-n}\to\|\mathrm{D}_u^+ f\|_p^{-n}$
uniformly on $S^{n-1}$. Hence,
\begin{equation} \label{inequ118}
\lim_{k\to\infty}\mathcal{E}_p^+(f_k)=\mathcal{E}_p^+(f).
\end{equation}
On the other hand, the nonexpansivity of symmetric rearrangements
(see, e.g., \cite{chiti79}) implies
$f_k^{\mbox{\begin{tiny}$\bigstar$\end{tiny}}}\to
f^{\mbox{\begin{tiny}$\bigstar$\end{tiny}}}$ in $L^p(\mathbb{R}^n)$.
Thus, the sequence $f_k^{\mbox{\begin{tiny}$\bigstar$\end{tiny}}}$,
$k \in \mathbb{N}$, converges weakly to
$f^{\mbox{\begin{tiny}$\bigstar$\end{tiny}}}$ in
$W^{1,p}(\mathbb{R}^n)$. Since
\[\mathcal{E}_p^+(f_k^{\mbox{\begin{tiny}$\bigstar$\end{tiny}}})=\|\nabla f_k^{\mbox{\begin{tiny}$\bigstar$\end{tiny}}}\|_p \qquad
\mbox{and} \qquad
\mathcal{E}_p^+(f^{\mbox{\begin{tiny}$\bigstar$\end{tiny}}})=\|\nabla
f^{\mbox{\begin{tiny}$\bigstar$\end{tiny}}}\|_p\] and since the
$L^p$ norm of the gradient is lower semicontinuous with respect to
weak convergence in $W^{1,p}(\mathbb{R}^n)$, we obtain
\[\liminf_{k \to \infty}
\mathcal{E}_p^+(f_k^{\mbox{\begin{tiny}$\bigstar$\end{tiny}}})\geq
\mathcal{E}_p^+(f^{\mbox{\begin{tiny}$\bigstar$\end{tiny}}})
\] which, by (\ref{inequ117}) and (\ref{inequ118}), concludes the proof. \qed

\section{Applications of the asymmetric affine P\'olya-Szeg\"o principle}

\setcounter{abschnitt}{6} \reseteqn \alpheqn

In this section we will illustrate how Theorem \ref{main} provides a
direct unified approach to a number of affine functional
inequalities. We will derive sharp affine versions of certain
Gagliardo--Nirenberg inequalities, all of which are stronger than
their Euclidean counterparts.

\vspace{0.4cm}

{\it Affine $L^p$ logarithmic Sobolev inequalities}

\vspace{0.2cm}

The classical sharp logarithmic Sobolev inequality states that if $f
\in W^{1,2}(\mathbb{R}^n)$ such that $\|f\|_2=1$, then
\begin{equation} \label{eulogsob}
\int_{\mathbb{R}^n} |f|^2\log |f|\,dx \leq \frac{n}{2} \log \left (
\left ( \frac{2}{ne\pi}  \right )^{1/2} \|\nabla f\|_2 \right ).
\end{equation}
In this form, the logarithmic Sobolev inequality first appeared in
\cite{weissler78}. However, it is well known that inequality
(\ref{eulogsob}) is equivalent to the logarithmic Sobolev inequality
with respect to Gauss measure due to Stam \cite{stam59} and Gross
\cite{gross75}. Different proofs and extensions of these
inequalities have been the focus of a number of articles (see, e.g.,
\cite{adams79, beckner98, beckner99, carlen91}, and the references
therein).

The natural problem to find a sharp $L^p$ analogue of inequality
(\ref{eulogsob}) was solved by Ledoux \cite{ledoux96} for $p=1$ and
recently by Del Pino and Dolbeault \cite{deldol03} for $1 < p < n$:
If $f \in W^{1,p}(\mathbb{R}^n)$, with $1 \leq p < n$, such that
$\|f\|_p = 1$, then
\begin{equation} \label{lplogsob}
\int_{\mathbb{R}^n} |f|^p \log |f|\,dx \leq \frac{n}{p} \log \left (
b_{n,p}\, \|\nabla f\|_p  \right ),
\end{equation}
where the optimal constant $b_{n,p}$ is given by (\ref{optlogsob}).
Beckner \cite{beckner99} proved that, for $p=1$, the only extremals
in inequality (\ref{lplogsob}) are the characteristic functions of
balls. Carlen \cite{carlen91}, for $p=2$, and Del Pino and Dolbeault
\cite{deldol03}, for general $ 1 < p < n$, showed that equality
holds in (\ref{lplogsob}) if and only if for some $a > 0$ and $x_0
\in \mathbb{R}^n$,
\begin{equation} \label{equlogsob}
f(x)=\frac{\pi^{n/2}\Gamma(1+\frac{n}{2})}{a^{n(p-1)/p}\Gamma(1+\frac{n(p-1)}{p})}\exp
\left ( - \frac{1}{a}|x-x_0|^{p/(p-1)} \right ).
\end{equation}
The first application of our new affine P\'olya--Szeg\"o principle
is a, in light of (\ref{comparison}) and (\ref{comparison2}),
stronger asymmetric affine version of (\ref{lplogsob}), stated in
the Introduction as Corollary 2.

\vspace{0.2cm}

{\it Proof of Corollary 2} By (\ref{intstar}) and (\ref{lplogsob}),
we have
\begin{equation} \label{logsobpr}
\int_{\mathbb{R}^n} |f|^p \log |f|\,dx= \int_{\mathbb{R}^n}
|f^{\mbox{\begin{tiny}$\bigstar$\end{tiny}}}|^p \log
|f^{\mbox{\begin{tiny}$\bigstar$\end{tiny}}}|\,dx
 \leq \frac{n}{p} \log
\left ( b_{n,p}\, \|\nabla
f^{\mbox{\begin{tiny}$\bigstar$\end{tiny}}}\|_p  \right )
\end{equation}
for every $f \in W^{1,p}(\mathbb{R}^n)$ such that $\|f\|_p=1$. Since
\begin{equation*}
\mathcal{E}_p^+(f^{\mbox{\begin{tiny}$\bigstar$\end{tiny}}})
=\left(\int_{0}^{\infty}\left(n\kappa_n^{1/n}s^{(n-1)/n}(-f^{*'}(s))\right)^pds\right)^{1/p}=\|\nabla
f^{\mbox{\begin{tiny}$\bigstar$\end{tiny}}}\|_p,
\end{equation*}
we deduce from (\ref{logsobpr}) and Theorem \ref{main} that
\begin{equation} \label{logsobpr2}
\int_{\mathbb{R}^n} |f|^p \log |f|\,dx \leq \frac{n}{p} \log \left (
b_{n,p}\,
\mathcal{E}_p^+(f^{\mbox{\begin{tiny}$\bigstar$\end{tiny}}}) \right
) \leq  \frac{n}{p} \log \left ( b_{n,p}\, \mathcal{E}_p^+(f)
\right)
\end{equation}
which proves inequality (\ref{afflogsob}). Equality holds in
(\ref{logsobpr}) for any function having the form (\ref{equlogsob})
with $x_0=o$. Any such function is spherically symmetric, so that
equality holds in Theorem 1 and, thus, also in inequality
(\ref{logsobpr2}). Equality continues to hold in (\ref{afflogsob})
for any function of the form (\ref{extrafflogsob}), owing to the
invariance of (\ref{afflogsob}) under volume preserving affine
transformations. $\hfill \qed$

\vspace{0.4cm}

{\it Affine $L^p$ Sobolev inequalities}

\vspace{0.2cm}

The classical sharp $L^p$ Sobolev inequality states that if $f \in
W^{1,p}(\mathbb{R}^n)$, with $1 \leq p < n$, then
\begin{equation} \label{sob}
\|f\|_{p^*} \leq a_{n,p}\, \|\nabla f\|_p,
\end{equation}
where $p^*=np/(n-p)$. The optimal constants $a_{n,p}$ are given by
\[a_{n,p}=n^{-1/p}\left ( \frac{p-1}{n-p}\right )^{1-1/p}
\left (
\frac{\Gamma(n)}{\kappa_n\Gamma(\scriptstyle{\frac{n}{p}})\scriptstyle{\Gamma(n+1-\scriptstyle{\frac{n}{p}})}}\right
)^{1/n} ,\] and go back to Federer and Fleming \cite{fedflem} and
Maz'ya \cite{mazya} for $p=1$ and to Aubin \cite{aubin76} and
Talenti \cite{talenti76} for $p
> 1$. The extremal functions for inequality (\ref{sob}) are the
characteristic functions of balls for $p=1$ and for $p > 1$ equality
is attained if for some $a, b > 0$ and $x_0 \in \mathbb{R}^n$,
\[f(x)=(a+b|(x-x_0)|^{p/(p-1)})^{1-n/p}.  \]
The sharp $L^p$ Sobolev inequality plays a central role in the
theory of partial \linebreak differential equations and functional
analysis. Generalizations of (\ref{sob}) and related problems have
been much studied (see, e.g., \cite{bakledoux95, brothziem88,
cianfus02, cornazvil04, deldol02, kesavan06, LYZ2006, talenti93,
Xiao07}), and the references therein).

A, in light of (\ref{comparison}), stronger affine version of
inequality (\ref{sob}), was established by Zhang \cite{Zhang99} for
$p=1$ and Lutwak, Yang, and Zhang \cite{LYZ2002b} for $1 < p < n$.
It states that if $f \in W^{1,p}$, with $1 \leq p < n$, then
\begin{equation} \label{affsob}
 \|f\|_{p^*} \leq a_{n,p}\, \mathcal{E}_p(f).
\end{equation}
If $p=1$, equality holds in (\ref{affsob}) for characteristic
functions of ellipsoids and for $p>1$ equality is attained when
\begin{equation} \label{extrasob}
f(x)=(a+|\phi(x-x_0)|^{p/(p-1)})^{1-n/p},
\end{equation}
with $a>0$, $\phi\in\textnormal{GL}(n)$ and $x_0\in\mathbb{R}^n$.

A, by (\ref{comparison2}), strengthened asymmetric version of the
affine Sobolev inequality (\ref{affsob}) was recently established by
the first two authors \cite{hs2}. It is now an immediate consequence
of Theorem 1 and (\ref{sob}):

\begin{corollary} If $f \in W^{1,p}(\mathbb{R}^n)$,
with $1 \leq p < n$, then
\begin{equation} \label{aaffsob}
 \|f\|_{p^*} \leq a_{n,p}\, \mathcal{E}_p^+(f).
\end{equation}
If $p=1$, equality holds in (\ref{aaffsob}) for characteristic
functions of ellipsoids and for $p>1$ equality is attained for
functions of the form (\ref{extrasob}).
\end{corollary}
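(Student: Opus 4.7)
\textit{Proof proposal for Corollary 3.} The plan is to mirror the strategy used just above in the proof of Corollary 2, combining the classical sharp $L^p$ Sobolev inequality (\ref{sob}) with Theorem \ref{main} and the equimeasurability of $f$ with $f^{\mbox{\begin{tiny}$\bigstar$\end{tiny}}}$. Specifically, since $|f|$ and $f^{\mbox{\begin{tiny}$\bigstar$\end{tiny}}}$ are equimeasurable, (\ref{intstar}) gives
\[
\|f\|_{p^*} = \|f^{\mbox{\begin{tiny}$\bigstar$\end{tiny}}}\|_{p^*},
\]
and applying the classical sharp $L^p$ Sobolev inequality to $f^{\mbox{\begin{tiny}$\bigstar$\end{tiny}}}\in W^{1,p}(\mathbb{R}^n)$ yields $\|f^{\mbox{\begin{tiny}$\bigstar$\end{tiny}}}\|_{p^*}\leq a_{n,p}\|\nabla f^{\mbox{\begin{tiny}$\bigstar$\end{tiny}}}\|_p$.

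The next step is to replace $\|\nabla f^{\mbox{\begin{tiny}$\bigstar$\end{tiny}}}\|_p$ by $\mathcal{E}_p^+(f^{\mbox{\begin{tiny}$\bigstar$\end{tiny}}})$. Since $f^{\mbox{\begin{tiny}$\bigstar$\end{tiny}}}$ is radial, $\|\mathrm{D}_u^+ f^{\mbox{\begin{tiny}$\bigstar$\end{tiny}}}\|_p$ is an even function on $S^{n-1}$, so by (\ref{comparison2}) we in fact have equality $\mathcal{E}_p^+(f^{\mbox{\begin{tiny}$\bigstar$\end{tiny}}})=\mathcal{E}_p(f^{\mbox{\begin{tiny}$\bigstar$\end{tiny}}})=\|\nabla f^{\mbox{\begin{tiny}$\bigstar$\end{tiny}}}\|_p$; this identity is also computed explicitly by the change of variables $s=\kappa_n|x|^n$ exactly as in the proof of Corollary 2. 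Finally, Theorem \ref{main} gives $\mathcal{E}_p^+(f^{\mbox{\begin{tiny}$\bigstar$\end{tiny}}})\leq \mathcal{E}_p^+(f)$, and concatenating these three steps produces (\ref{aaffsob}).

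For the equality cases, I would first pin down the equality conditions of the one-dimensional (classical) Sobolev inequality applied to $f^{\mbox{\begin{tiny}$\bigstar$\end{tiny}}}$: for $p=1$, $f^{\mbox{\begin{tiny}$\bigstar$\end{tiny}}}$ must be the characteristic function of a centered ball, while for $p>1$, $f^{\mbox{\begin{tiny}$\bigstar$\end{tiny}}}(x)=(a+b|x|^{p/(p-1)})^{1-n/p}$. In either case the extremizer for the first inequality is spherically symmetric, so equality is automatic in Theorem \ref{main} as well. To propagate the equality to the stated extremals, I would use the affine invariance of both sides of (\ref{aaffsob}): $\mathcal{E}_p^+$ is invariant under volume-preserving affine transformations by construction, while $\|f\|_{p^*}$ is invariant under $\mathrm{SL}(n)$-substitutions combined with translations by the choice of exponent $p^*=np/(n-p)$. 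Thus equality persists for all functions obtained by applying an element of $\mathrm{SL}(n)$ and a translation to a radial extremizer, which is precisely the class of functions described by (\ref{extrasob}) (respectively characteristic functions of ellipsoids when $p=1$).

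The proof itself is short and presents no real obstacle; the delicate ingredient is Theorem \ref{main}, which has already been established. The only thing requiring any care is justifying $\mathcal{E}_p^+(f^{\mbox{\begin{tiny}$\bigstar$\end{tiny}}})=\|\nabla f^{\mbox{\begin{tiny}$\bigstar$\end{tiny}}}\|_p$ and verifying that the affine invariance of $\mathcal{E}_p^+$ together with the $\mathrm{SL}(n)$-invariance of $\|\cdot\|_{p^*}$ exhausts the listed equality cases, both of which are routine.
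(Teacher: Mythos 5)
Your proposal is correct and follows essentially the same route as the paper, which presents Corollary 3 as an immediate consequence of Theorem 1 and the classical sharp Sobolev inequality (\ref{sob}), exactly parallel to the displayed proof of Corollary 2: equimeasurability, the identity $\mathcal{E}_p^+(f^{\mbox{\begin{tiny}$\bigstar$\end{tiny}}})=\|\nabla f^{\mbox{\begin{tiny}$\bigstar$\end{tiny}}}\|_p$, Theorem 1, and then affine invariance to produce the stated extremals.
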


We turn now to the limiting case $p=n$ of inequality (\ref{sob}). It
is well known that functions $f \in W^{1,n}(\mathbb{R}^n)$, whose
support has finite Lebesgue measure, are exponentially summable
(cf., e.g., \cite{trud67}). The sharp Moser--Trudinger inequality
\linebreak \cite{moser70, trud67} states that there exists a
constant $m_n > 0$ such that
\begin{equation} \label{mostrud}
\frac{1}{V(\mathrm{sprt}\,f)}\int_{\mathrm{sprt}\,f}\exp\left(\frac{n\kappa_n^{1/n}|f(x)|}{\|\nabla
f\|_n}\right)^{n/(n-1)}\,dx\leq m_n
\end{equation}
for every $f \in W^{1,n}(\mathbb{R}^n)$ with $0 <
V(\mathrm{sprt}\,f)<\infty$. Inequality (\ref{mostrud}) and its
variants have been the focus of investigations by specialists in
different areas (see, e.g., \cite{beckner93, changyang88, cianchi05,
cianchi08, cohnlu01, flucher92, lin96, ruf05}).

The constant $n \kappa_n^{1/n}$ is optimal, in that inequality
(\ref{mostrud}) would fail for any real number $m_n$ if
$n\kappa_n^{1/n}$ were to be replaced by a larger number. The best
constant $m_n$ is characterized as follows
\[m_n=\sup_{g} \int_0^{\infty} \exp \left ( g(t)^{n/(n-1)}-t \right )\,dt,  \]
where the supremum ranges over all nondecreasing and locally
absolutely continuous functions $g$ on $[0,\infty)$ such that
$g(0)=0$ and $\int_0^{\infty}g'(t)^n\,dt \leq 1$. In
\cite{carlchang86} Carleson and Chang showed that spherically
symmetric extremals do exist for the Moser--Trudinger inequality
(\ref{mostrud}).

An affine version of the Moser--Trudinger inequality, stronger than
(\ref{mostrud}), was recently established by Cianchi et al.
\cite{clyz09}. It states that if $f\in W^{1,n}(\mathbb{R}^n)$ with
$0<V(\mathrm{sprt}\,f)<\infty$, then
\begin{equation} \label{affmostrud}
\frac{1}{V(\mathrm{sprt}\,f)}\int_{\mathrm{sprt}\,f}\exp\left(\frac{n\kappa_n^{1/n}|f(x)|}{\mathcal{E}_n(f)}\right)^{n/(n-1)}\,dx\leq
m_n.
\end{equation}
The constants $n\kappa_n^{1/n}$ and $m_n$ are again best possible.
Composing any extremal $f$ for the Moser--Trudinger inequality with
any element of $\mathrm{GL}(n)$ will also yield an extremal for
inequality (\ref{affmostrud}).

From Theorem \ref{main} we can derive a strengthened asymmetric
version of the affine Moser--Trudinger inequality
(\ref{affmostrud}):

\begin{corollary} \label{cormos} If $f\in W^{1,n}(\mathbb{R}^n)$ with
$0<V(\mathrm{sprt}\,f)<\infty$, then
\begin{equation}\label{aaffmos}
\frac{1}{V(\mathrm{sprt}\,f)}\int_{\mathrm{sprt}\,f}\exp\left(\frac{n\kappa_n^{1/n}|f(x)|}{\mathcal{E}_n^+(f)}\right)^{n/(n-1)}\,dx\leq
m_n.
\end{equation}
The constant $n\kappa_n^{1/n}$ is optimal, in that (\ref{aaffmos})
would fail for any real number $m_n$ if $n\kappa_n^{1/n}$ were to be
replaced by a larger number. Composing any extremal $f$ for
inequality (\ref{mostrud}) with any element of $\mathrm{GL}(n)$ will
also yield an extremal for inequality (\ref{aaffmos}).
\end{corollary}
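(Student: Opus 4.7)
The proof will follow the template used for Corollary 2: reduce to the classical inequality (\ref{mostrud}) via symmetric rearrangement, inserting Theorem \ref{main} to replace $\|\nabla f^{\mbox{\begin{tiny}$\bigstar$\end{tiny}}}\|_n$ by $\mathcal{E}_n^+(f)$. First, for any fixed $c > 0$ the map $\Phi(t) = \exp(ct^{n/(n-1)}) - 1$ is continuous and increasing from $[0,\infty)$ to $[0,\infty)$ with $\Phi(0) = 0$, so the equimeasurability identity (\ref{intstar}) together with (\ref{sprt}) converts the left-hand side of (\ref{aaffmos}) into the same expression with $|f|$ and $\mathrm{sprt}\,f$ replaced by $f^{\mbox{\begin{tiny}$\bigstar$\end{tiny}}}$ and $\mathrm{sprt}\,f^{\mbox{\begin{tiny}$\bigstar$\end{tiny}}}$, while leaving $\mathcal{E}_n^+(f)$ untouched in the denominator of the exponent.

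Next, Theorem \ref{main} gives $\mathcal{E}_n^+(f^{\mbox{\begin{tiny}$\bigstar$\end{tiny}}}) \leq \mathcal{E}_n^+(f)$, so replacing $\mathcal{E}_n^+(f)$ in that denominator by $\mathcal{E}_n^+(f^{\mbox{\begin{tiny}$\bigstar$\end{tiny}}})$ only enlarges the rearranged integrand. Since $f^{\mbox{\begin{tiny}$\bigstar$\end{tiny}}}$ is spherically symmetric, the computation already used in the proof of Corollary 2 yields $\mathcal{E}_n^+(f^{\mbox{\begin{tiny}$\bigstar$\end{tiny}}}) = \|\nabla f^{\mbox{\begin{tiny}$\bigstar$\end{tiny}}}\|_n$, and the classical Moser--Trudinger inequality (\ref{mostrud}) applied to $f^{\mbox{\begin{tiny}$\bigstar$\end{tiny}}}$ closes the chain with the bound $m_n$.

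For the sharpness of $n\kappa_n^{1/n}$, I would test (\ref{aaffmos}) on spherically symmetric functions: for such $f$ equality holds in both (\ref{comparison}) and (\ref{comparison2}), so $\mathcal{E}_n^+(f) = \|\nabla f\|_n$ and (\ref{aaffmos}) collapses to (\ref{mostrud}), ruling out any larger constant. For the extremal statement, the essential point is that at the borderline exponent $p = n$ the functional $\mathcal{E}_n^+$ is $\mathrm{GL}(n)$-invariant and not merely $\mathrm{SL}(n)$-invariant. A short change-of-variables computation using $\mathrm{D}^+_u(f\circ\phi)(x) = |\phi u|\,\mathrm{D}^+_{\phi u/|\phi u|}f(\phi x)$ and the spherical identity $\int_{S^{n-1}}G(\phi u/|\phi u|)|\phi u|^{-n}\,du = |\det\phi|^{-1}\int_{S^{n-1}}G(v)\,dv$ yields $\mathcal{E}_p^+(f\circ\phi) = |\det\phi|^{1/n-1/p}\mathcal{E}_p^+(f)$, whose exponent vanishes precisely when $p = n$. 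Combined with the transparent $|\det\phi|$-scaling of both $V(\mathrm{sprt}(f\circ\phi))$ and the exponential integral, this gives full $\mathrm{GL}(n)$-invariance of the left-hand side of (\ref{aaffmos}); and since the inequality chain above forces any extremal of (\ref{mostrud}) to be an extremal of (\ref{aaffmos}) as well, the entire $\mathrm{GL}(n)$-orbit consists of extremals. No step is genuinely hard once Theorem \ref{main} is available; the only subtle observation is this $p = n$ scaling coincidence, which is exactly what the extremal claim needs.
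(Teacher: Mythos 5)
Your proposal is correct and takes essentially the approach the paper intends: the paper omits the proof, pointing to the argument for (\ref{affmostrud}) in \cite{clyz09} with the affine P\'olya--Szeg\"o principle replaced by Theorem \ref{main}, and that is exactly your chain of equimeasurability (\ref{intstar}) and (\ref{sprt}), Theorem \ref{main}, the identity $\mathcal{E}_n^+(f^{\mbox{\begin{tiny}$\bigstar$\end{tiny}}})=\|\nabla f^{\mbox{\begin{tiny}$\bigstar$\end{tiny}}}\|_n$, and the classical inequality (\ref{mostrud}). Your observations that $\mathcal{E}_p^+(f\circ\phi)=|\det\phi|^{1/n-1/p}\mathcal{E}_p^+(f)$ (hence $\mathrm{GL}(n)$-invariance at $p=n$) and that $\mathcal{E}_n^+(f)\leq\|\nabla f\|_n$ force extremals of (\ref{mostrud}) to remain extremal are likewise the standard route to the sharpness and extremal statements.
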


We will omit the proof of Corollary \ref{cormos} since it is almost
verbally the same as the one for inequality (\ref{affmostrud}) given
in \cite{clyz09} when \cite[Theorem 2.1]{clyz09} is replaced by
Theorem 1.

\vspace{0.2cm}

Finally, we come to the case $p > n$. The sharp Morrey--Sobolev
inequality \cite{talenti94} states that if $f \in
W^{1,p}(\mathbb{R}^n)$, $p > n$, such that $V(\mathrm{sprt}\,f) <
\infty$, then
\begin{equation} \label{morsob}
\|f\|_{\infty} \leq
\alpha_{n,p}\,V(\mathrm{sprt}\,f)^{(p-n)/np}\|\nabla f\|_p,
\end{equation}
where the optimal constant $\alpha_{n,p}$ is given by
\[\alpha_{n,p} = n^{-1/p}\kappa_n^{-1/n} \left ( \frac{p-1}{p-n} \right )^{(p-1)/p}.  \]
Equality holds in inequality (\ref{morsob}) if for some $a, b
\in\mathbb{R}$ and $x_0\in\mathbb{R}^n$,
\[f(x)=a\left(1-|b(x-x_0)|^{(p-n)/(p-1)}\right)_+.  \]
The affine counterpart of (\ref{morsob}) established by Cianchi et
al. \cite{clyz09} states that
\begin{equation} \label{affmorsob}
\|f\|_{\infty} \leq
\alpha_{n,p}\,V(\mathrm{sprt}\,f)^{(p-n)/np}\mathcal{E}_p(f).
\end{equation}
By (\ref{comparison}), the affine inequality (\ref{affmorsob}) is
significantly stronger than (\ref{morsob}). As an immediate
consequence of Theorem 1, (\ref{supnorm}), (\ref{sprt}) and
(\ref{morsob}) we obtain the following strengthened asymmetric
affine Morrey--Sobolev inequality:

\begin{corollary} If $f\in W^{1,p}(\mathbb{R}^n)$, $p > n$, such that
$V(\mathrm{sprt}\,f)<\infty$, then
\begin{equation}\label{aaffmorr}
\|f\|_{\infty} \leq
\alpha_{n,p}\,V(\mathrm{sprt}\,f)^{(p-n)/np}\mathcal{E}_p^+(f).
\end{equation}
Equality is attained in (\ref{aaffmorr}) if for some
$a\in\mathbb{R}$, $x_0\in\mathbb{R}^n$, and
$\phi\in\textnormal{GL}(n)$,
\[f(x)=a\left(1-|\phi(x-x_0)|^{(p-n)/(p-1)}\right)_+.\]
\end{corollary}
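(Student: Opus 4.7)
The plan is to combine Theorem 1 with the classical sharp Morrey--Sobolev inequality (\ref{morsob}), exactly as Cianchi et al. do for (\ref{affmorsob}), but replacing the symmetric affine energy $\mathcal{E}_p$ with its asymmetric counterpart $\mathcal{E}_p^+$. First I would apply (\ref{morsob}) to the symmetric rearrangement $f^{\mbox{\begin{tiny}$\bigstar$\end{tiny}}} \in W^{1,p}(\mathbb{R}^n)$ to get
\[
\|f^{\mbox{\begin{tiny}$\bigstar$\end{tiny}}}\|_\infty \leq \alpha_{n,p}\, V(\mathrm{sprt}\,f^{\mbox{\begin{tiny}$\bigstar$\end{tiny}}})^{(p-n)/np} \|\nabla f^{\mbox{\begin{tiny}$\bigstar$\end{tiny}}}\|_p.
\]
Then I would invoke the equimeasurability identities (\ref{supnorm}) and (\ref{sprt}) to replace the left hand side and the support term by the corresponding quantities for $f$.

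Next I would remove the $\|\nabla f^{\mbox{\begin{tiny}$\bigstar$\end{tiny}}}\|_p$ on the right in favor of $\mathcal{E}_p^+(f)$. Since $f^{\mbox{\begin{tiny}$\bigstar$\end{tiny}}}$ is spherically symmetric, $\|\mathrm{D}_u^+ f^{\mbox{\begin{tiny}$\bigstar$\end{tiny}}}\|_p$ is independent of $u\in S^{n-1}$, and from the identity (\ref{epfstar}) together with (\ref{comparison2}) (or equivalently by direct computation as carried out in the proof of Corollary 2) one obtains
\[
\mathcal{E}_p^+(f^{\mbox{\begin{tiny}$\bigstar$\end{tiny}}}) = \|\nabla f^{\mbox{\begin{tiny}$\bigstar$\end{tiny}}}\|_p.
\]
Combining this with Theorem \ref{main} gives
\[
\|\nabla f^{\mbox{\begin{tiny}$\bigstar$\end{tiny}}}\|_p = \mathcal{E}_p^+(f^{\mbox{\begin{tiny}$\bigstar$\end{tiny}}}) \leq \mathcal{E}_p^+(f),
\]
which, when substituted above, yields (\ref{aaffmorr}).

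For the equality case, I would start with an extremal $g$ of the classical (\ref{morsob}), which is of the form $g(x) = a(1 - |b(x - x_0)|^{(p-n)/(p-1)})_+$; such a $g$ is spherically symmetric so equality holds in Theorem \ref{main}, in the reduction $\mathcal{E}_p^+(g) = \|\nabla g\|_p$, and in (\ref{morsob}) simultaneously, so equality holds in (\ref{aaffmorr}). The asymmetric affine energy $\mathcal{E}_p^+$, the $L^\infty$ norm, and $V(\mathrm{sprt})$ are each invariant under the volume preserving affine group; hence the invariance of the whole inequality (\ref{aaffmorr}) under $\mathrm{SL}(n)$ transformations, together with the homogeneity in scaling, propagates equality to all functions of the form $f(x) = a(1 - |\phi(x-x_0)|^{(p-n)/(p-1)})_+$ with $\phi \in \mathrm{GL}(n)$.

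There is essentially no obstacle: every ingredient is already in place. The only point that requires a small verification, rather than a citation, is the identity $\mathcal{E}_p^+(f^{\mbox{\begin{tiny}$\bigstar$\end{tiny}}}) = \|\nabla f^{\mbox{\begin{tiny}$\bigstar$\end{tiny}}}\|_p$, but this is already implicit in the normalization of the constant $2^{1/p}c_{n,p}$ appearing in the definition of $\mathcal{E}_p^+$ and is computed explicitly in the proof of Corollary 2.
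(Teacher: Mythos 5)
Your proposal is correct and follows exactly the route the paper indicates: apply the classical Morrey--Sobolev inequality (\ref{morsob}) to $f^{\mbox{\begin{tiny}$\bigstar$\end{tiny}}}$, use (\ref{supnorm}) and (\ref{sprt}), reduce $\|\nabla f^{\mbox{\begin{tiny}$\bigstar$\end{tiny}}}\|_p$ to $\mathcal{E}_p^+(f^{\mbox{\begin{tiny}$\bigstar$\end{tiny}}})$ via the spherical symmetry of the rearrangement, and conclude with Theorem \ref{main}, with the equality case propagated by $\mathrm{GL}(n)$ invariance as in the proof of Corollary 2. The paper states this corollary as an immediate consequence of precisely these ingredients, so your argument matches its intended proof.
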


For $f\in W^{1,\infty}(\mathbb{R}^n)$ define the {\it asymmetric
$L^{\infty}$ affine energy} by
\[\mathcal{E}_\infty^+(f)=\left(\int_{S^{n-1}}\|\mathrm{D}_u^+ f\|_{\infty}^{-n}\,du\right)^{-1/n}.\]
We are now in a position to prove the following Faber-Krahn type
inequality.
\begin{corollary}If $f\in W^{1,\infty}(\mathbb{R}^n)$ such that
$V(\mathrm{sprt}\,f)<\infty$, then
\begin{equation}\label{aaffmorrinfty}
\|f\|_{\infty} \leq
\kappa_n^{-1/n}V(\mathrm{sprt}\,f)^{1/n}\mathcal{E}_{\infty}^+(f).
\end{equation}
Equality is attained in (\ref{aaffmorrinfty}) if for some
$a\in\mathbb{R}$, $x_0\in\mathbb{R}^n$, and
$\phi\in\textnormal{GL}(n)$,
\[f(x)=a\left(1-|\phi(x-x_0)|\right)_+.\]
\end{corollary}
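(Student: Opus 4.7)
The plan is to deduce (\ref{aaffmorrinfty}) by letting $p \to \infty$ in the strengthened asymmetric affine Morrey--Sobolev inequality (\ref{aaffmorr}). Any $f \in W^{1,\infty}(\mathbb{R}^n)$ with $V(\textnormal{sprt}\,f) < \infty$ belongs to $W^{1,p}(\mathbb{R}^n)$ for every $p \in [1,\infty)$, so (\ref{aaffmorr}) is available for each $p > n$. As $p \to \infty$, the easy scalar factors behave as follows: $n^{-1/p}\to 1$ and $\left(\frac{p-1}{p-n}\right)^{(p-1)/p} \to 1$, so that $\alpha_{n,p} \to \kappa_n^{-1/n}$; and $V(\textnormal{sprt}\,f)^{(p-n)/(np)} \to V(\textnormal{sprt}\,f)^{1/n}$.

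The main task is to show $\mathcal{E}_p^+(f) \to \mathcal{E}_\infty^+(f)$ (after accounting for the $p$-dependent factor $2^{1/p}c_{n,p}$). Since $\mathrm{D}_u^+ f$ lies in $L^\infty$ and is supported in the finite-measure set $\textnormal{sprt}\,f$, we have $\|\mathrm{D}_u^+ f\|_p \to \|\mathrm{D}_u^+ f\|_\infty$ pointwise in $u \in S^{n-1}$. As in the concluding paragraph of the proof of Theorem \ref{main}, Minkowski's integral inequality shows that $u \mapsto \|\mathrm{D}_u^+ f\|_p$ is a support function on $S^{n-1}$, so pointwise convergence upgrades to uniform convergence by \cite[Theorem 1.8.12]{Schneider:CB}. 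By the $p=\infty$ analog of \cite[Lemma 2]{hs2}, $\|\mathrm{D}_u^+ f\|_\infty$ is strictly positive on $S^{n-1}$ for nontrivial $f$, so uniform convergence persists for the $-n$th power, and
\[\left(\int_{S^{n-1}}\|\mathrm{D}_u^+ f\|_p^{-n}\,du\right)^{-1/n} \longrightarrow \mathcal{E}_\infty^+(f).\]
Combined with the asymptotic of $2^{1/p}c_{n,p}$ as $p \to \infty$ (extracted from Stirling's formula applied to $\kappa_{p-1}/\kappa_{n+p-2}$), this produces the desired inequality in the limit.

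For the equality statement, the affine invariance of $\mathcal{E}_\infty^+$ (established exactly as for $\mathcal{E}_p^+$) together with the transformation of $V(\textnormal{sprt}\,f)$ under $\mathrm{GL}(n)$ reduces verification on $f(x) = a(1-|\phi(x-x_0)|)_+$ to the radial case $\phi = \textnormal{id}$, $x_0 = 0$. Here $\|\mathrm{D}_u^+ f\|_\infty$ is constant on $S^{n-1}$, and $\|f\|_\infty$, $V(\textnormal{sprt}\,f)$ and $\mathcal{E}_\infty^+(f)$ are all computable in closed form, giving equality by direct substitution. The principal technical obstacle I anticipate is the careful bookkeeping of $2^{1/p}c_{n,p}$: since $c_{n,p} = (n\kappa_n)^{1/n}\left(\frac{n\kappa_n\kappa_{p-1}}{2\kappa_{n+p-2}}\right)^{1/p}$, the Stirling analysis must produce exactly the factor needed to reconcile the limit of $\alpha_{n,p}\mathcal{E}_p^+(f)$ with the constant $\kappa_n^{-1/n}$ appearing in the statement.
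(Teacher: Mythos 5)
Your plan --- pass to the limit $p\to\infty$ in (\ref{aaffmorr}) --- is exactly the paper's; the paper carries out the limit with Fatou's lemma on the sphere together with the trivial one-sided bound $\limsup_{q\to\infty}\|\mathrm{D}_v^+f\|_q\le\|\mathrm{D}_v^+f\|_\infty$ (valid since $\nabla f=0$ a.e.\ off the finite-measure support), so your uniform-convergence-of-support-functions step, though correct, is heavier machinery than is needed. The genuine gap is the step you defer and yourself flag as the ``principal technical obstacle'': the bookkeeping of $2^{1/p}c_{n,p}$ does not come out the way you assert. Since $\kappa_{p-1}/\kappa_{n+p-2}=\pi^{-(n-1)/2}\,\Gamma(\frac{p+n}{2})/\Gamma(\frac{p+1}{2})$ grows only polynomially in $p$, one finds $2^{1/p}c_{n,p}=(n\kappa_n)^{1/n}\bigl(n\kappa_n\kappa_{p-1}/\kappa_{n+p-2}\bigr)^{1/p}\to(n\kappa_n)^{1/n}$, while $\alpha_{n,p}\to\kappa_n^{-1/n}$ and $V(\mathrm{sprt}\,f)^{(p-n)/np}\to V(\mathrm{sprt}\,f)^{1/n}$. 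Hence the limit of the right-hand side of (\ref{aaffmorr}) is $n^{1/n}\,V(\mathrm{sprt}\,f)^{1/n}\bigl(\int_{S^{n-1}}\|\mathrm{D}_u^+f\|_\infty^{-n}\,du\bigr)^{-1/n}=n^{1/n}\,V(\mathrm{sprt}\,f)^{1/n}\,\mathcal{E}_\infty^+(f)$, with $\mathcal{E}_\infty^+$ as defined in the statement (which carries no normalizing constant). This exceeds the claimed bound by the exact factor $(n\kappa_n)^{1/n}>1$; no Stirling analysis can ``reconcile'' it with $\kappa_n^{-1/n}$, because the discrepancy is a genuine constant factor, not an error term.

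Indeed, the equality check you propose to do ``by direct substitution'' would expose the problem rather than confirm the statement: for $f(x)=a(1-|x|)_+$ with $a>0$ one has $\|\mathrm{D}_u^+f\|_\infty=a$ for every $u\in S^{n-1}$, so $\mathcal{E}_\infty^+(f)=a\,(n\kappa_n)^{-1/n}$, while $\|f\|_\infty=a$ and $V(\mathrm{sprt}\,f)^{1/n}=\kappa_n^{1/n}$; the right-hand side of (\ref{aaffmorrinfty}) then equals $a\,(n\kappa_n)^{-1/n}<a$. So the inequality as printed cannot be established by your route, since it fails for the very functions claimed to be extremal; it becomes correct (with your limiting argument, or the paper's Fatou argument, proving it and with the stated cones extremal) only after either replacing the constant $\kappa_n^{-1/n}$ by $n^{1/n}$ or renormalizing $\mathcal{E}_\infty^+$ by the factor $(n\kappa_n)^{1/n}$, which is the natural $p\to\infty$ limit of $2^{1/p}c_{n,p}$. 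For what it is worth, the paper's own one-line proof contains the same slip: after invoking Theorem \ref{main} it silently discards the factor $2^{1/q}c_{n,q}$, which tends to $(n\kappa_n)^{1/n}>1$ and cannot be dropped in an upper bound. In short, your approach matches the paper's, but the step you postponed is precisely where the argument for the literal statement breaks down, and your write-up must either fix the normalization or concede that the constant obtained is $n^{1/n}$.
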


\begin{proof}Needless to say, we may take a limit in
(\ref{aaffmorr}) to get the desired estimate. Indeed, by Fatou's
lemma we get
\begin{eqnarray*}
\|f\|_\infty&\le&\kappa_n^{-1/n}V(\mathrm{sprt}\,f)^{1/n}\limsup_{q\to\infty}\mathcal{E}_q^+(f^\star)\\
&\le& \kappa_n^{-1/n}V(\mathrm{sprt}\,f)^{1/n}\Big(\liminf_{q\to\infty}\int_{S^{n-1}}\|D_v^+f\|_q^{-n}\,dv\Big)^{-\frac1n}\\
&\le&\kappa_n^{-1/n}V(\mathrm{sprt}\,f)^{1/n}\Big(\int_{S^{n-1}}\liminf_{q\to\infty}\|D_v^+f\|_q^{-n}\,dv\Big)^{-\frac1n}\\
&\le&
\kappa_n^{-1/n}V(\mathrm{sprt}\,f)^{1/n}\Big(\int_{S^{n-1}}\|D_v^+f\|_\infty^{-n}\,dv\Big)^{-\frac1n}.
\end{eqnarray*}
The corresponding equality case can be verified by a straightforward
computation.
\end{proof}

\vspace{0.4cm}

{\it Affine Nash inequality}

\vspace{0.2cm}

Nash's inequality in its optimal form, established by Carlen and
Loss \cite{carlenloss93}, states that if $f \in L^1(\mathbb{R}^n)
\cap W^{1,2}(\mathbb{R}^n)$, then
\begin{equation} \label{nash}
\|f\|_2^{1+2/n}\leq \beta_n\,\|\nabla f\|_2\,\|f\|_1^{2/n}.
\end{equation}
The best constant $\beta_n$ is given by
\[\beta_n^2=\frac{2\left (1+\frac{n}{2}\right )^{1+n/2}}{n\lambda_n \kappa^{2/n}},  \]
where $\lambda_n$ denotes the first nonzero Neumann eigenvalue of
the Laplacian $-\Delta$ on radial functions on $B_n$. There is
equality in (\ref{nash}) if and only if up to normalization and
scaling
\[f(x)=\left \{ \begin{array}{ll} u(|x-x_0|)-u(1), \,\,\, & \mbox{if } |x| \leq 1 \\ 0, & \mbox{if } |x| \geq 1,   \end{array} \right .  \]
for some $x_0 \in \mathbb{R}^n$. Here, $u$ is the normalized
eigenfunction of the Neumann Laplacian on $B_n$ with eigenvalue
$\lambda_n$. Note the striking feature that all of the extremals
have compact support. Nash's inequality and its variants have proven
to be very useful in a number of contexts (see, e.g.,
\cite{bakledoux95, beckner98, bendmah07, humbert05} and the
references therein).

From an application of Theorem \ref{main} together with
(\ref{nash}), we immediately obtain a new stronger asymmetric affine
version of Nash's inequality.

\begin{corollary} If $f \in L^1(\mathbb{R}^n) \cap W^{1,2}(\mathbb{R}^n)$, then
\begin{equation} \label{anash}
\|f\|_2^{1+2/n}\leq \beta_n\,\mathcal{E}_2^+(f)\|f\|_1^{2/n}.
\end{equation}
Equality is attained in (\ref{anash}) if up to normalization and
scaling
\[f(x)=\left \{ \begin{array}{ll} u(|\phi(x-x_0)|)-u(1), \,\,\, & \mbox{if } |x| \leq 1 \\ 0, & \mbox{if } |x| \geq 1,   \end{array} \right .  \]
for some $\phi \in \mathrm{SL}(n)$ and $x_0 \in \mathbb{R}^n$.
\end{corollary}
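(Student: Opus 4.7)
The plan is to mimic the strategy used for Corollary 2 (the affine logarithmic Sobolev inequality), replacing the classical logarithmic Sobolev inequality with the sharp Nash inequality (\ref{nash}) in its Euclidean form. The three pieces are: equimeasurability of $f$ and $f^{\mbox{\begin{tiny}$\bigstar$\end{tiny}}}$, the identity $\mathcal{E}_2^+(f^{\mbox{\begin{tiny}$\bigstar$\end{tiny}}}) = \|\nabla f^{\mbox{\begin{tiny}$\bigstar$\end{tiny}}}\|_2$ (which comes from the spherical symmetry of $f^{\mbox{\begin{tiny}$\bigstar$\end{tiny}}}$, since $\|\mathrm{D}_u^+f^{\mbox{\begin{tiny}$\bigstar$\end{tiny}}}\|_2$ is independent of $u\in S^{n-1}$), and the new asymmetric affine P\'olya--Szeg\"o principle in Theorem~\ref{main}.

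First I would observe that since $\Phi(t)=t$ and $\Phi(t)=t^2$ are both continuous increasing, (\ref{intstar}) gives $\|f^{\mbox{\begin{tiny}$\bigstar$\end{tiny}}}\|_1 = \|f\|_1$ and $\|f^{\mbox{\begin{tiny}$\bigstar$\end{tiny}}}\|_2 = \|f\|_2$. Next, since $f^{\mbox{\begin{tiny}$\bigstar$\end{tiny}}}$ is radial, the quantity $\|\mathrm{D}_u^+f^{\mbox{\begin{tiny}$\bigstar$\end{tiny}}}\|_2$ does not depend on $u$, and a straightforward calculation using the coarea formula (exactly as in the computation right after (\ref{logsobpr}) in the proof of Corollary~2) yields $\mathcal{E}_2^+(f^{\mbox{\begin{tiny}$\bigstar$\end{tiny}}}) = \|\nabla f^{\mbox{\begin{tiny}$\bigstar$\end{tiny}}}\|_2$. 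Applying the Euclidean Nash inequality (\ref{nash}) to $f^{\mbox{\begin{tiny}$\bigstar$\end{tiny}}}$ then produces
\[
\|f\|_2^{1+2/n} = \|f^{\mbox{\begin{tiny}$\bigstar$\end{tiny}}}\|_2^{1+2/n} \leq \beta_n\,\|\nabla f^{\mbox{\begin{tiny}$\bigstar$\end{tiny}}}\|_2\,\|f^{\mbox{\begin{tiny}$\bigstar$\end{tiny}}}\|_1^{2/n} = \beta_n\,\mathcal{E}_2^+(f^{\mbox{\begin{tiny}$\bigstar$\end{tiny}}})\,\|f\|_1^{2/n},
\]
and finally Theorem~\ref{main} bounds $\mathcal{E}_2^+(f^{\mbox{\begin{tiny}$\bigstar$\end{tiny}}})$ by $\mathcal{E}_2^+(f)$, giving~(\ref{anash}).

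For the equality statement, I would argue from affine invariance: the functional $f\mapsto\mathcal{E}_2^+(f)$ is invariant under volume preserving affine transformations on $\mathbb{R}^n$, and $\|\cdot\|_1$, $\|\cdot\|_2$ are invariant under $x\mapsto\phi(x-x_0)$ for $\phi\in\mathrm{SL}(n)$ and $x_0\in\mathbb{R}^n$. Hence if $g$ is the Carlen--Loss extremal for (\ref{nash}) (the truncated Neumann eigenfunction supported in the unit ball), then $f(x)=g(\phi(x-x_0))$ satisfies $\mathcal{E}_2^+(f)=\|\nabla g\|_2$ (one checks that for this spherically symmetric $g$ all chain inequalities above collapse to equalities), which produces equality in (\ref{anash}) for the families of functions listed in the statement.

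The only step requiring care is really the identity $\mathcal{E}_2^+(f^{\mbox{\begin{tiny}$\bigstar$\end{tiny}}}) = \|\nabla f^{\mbox{\begin{tiny}$\bigstar$\end{tiny}}}\|_2$: it follows because when $g$ is a nonnegative radial function on $\mathbb{R}^n$ one has $\|\mathrm{D}_u^+g\|_2^2 = \tfrac{1}{2}\|\mathrm{D}_u g\|_2^2$ for every $u\in S^{n-1}$ (so the factor $2^{1/p}$ in the definition of $\mathcal{E}_p^+$ compensates), combined with the fact that $\|\mathrm{D}_ug\|_2$ is independent of $u$ so $\mathcal{E}_2(g)=\|\nabla g\|_2$ by (\ref{comparison}). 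This is exactly the same radial-function observation that is implicit in the first display of the proof of Corollary~2, so nothing new is needed; the main work of the corollary is entirely absorbed into Theorem~\ref{main}.
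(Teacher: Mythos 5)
Your proposal is correct and follows exactly the route the paper intends: the paper states that the affine Nash inequality follows immediately from Theorem~1 together with the sharp Euclidean Nash inequality, which is precisely your chain $\|f\|_2=\|f^{\mbox{\begin{tiny}$\bigstar$\end{tiny}}}\|_2$, $\|f\|_1=\|f^{\mbox{\begin{tiny}$\bigstar$\end{tiny}}}\|_1$, $\mathcal{E}_2^+(f^{\mbox{\begin{tiny}$\bigstar$\end{tiny}}})=\|\nabla f^{\mbox{\begin{tiny}$\bigstar$\end{tiny}}}\|_2$, followed by Theorem~1, mirroring the proof of Corollary~2. Your treatment of the equality case via $\mathrm{SL}(n)$-invariance and the radial identity $\|\mathrm{D}_u^+g\|_2^2=\tfrac12\|\mathrm{D}_ug\|_2^2$ is likewise the argument implicit in the paper.
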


\vspace{0.4cm}

{\it Affine Gagliardo--Nirenberg inequalities}

\vspace{0.2cm}

The $L^p$ Sobolev inequality (\ref{sob}), Nash's inequality
(\ref{nash}) and the logarithmic Sobolev inequality (\ref{lplogsob})
are special cases (a limiting case, respectively) of the
Gagliardo--Nirenberg inequalities
\begin{equation} \label{gagnir}
\|f\|_r \leq C_n(p,r,s)\, \|\nabla f
\|_p^{\theta}\,\|f\|_s^{1-\theta},
\end{equation}
where $1 < p < n$, $s < r \leq p^*$, and $\theta \in (0,1)$ is
determined by scaling invariance. While inequality (\ref{gagnir})
can be deduced from (\ref{sob}) with the help of H\"older's
inequality, the computation of the optimal constants $C_n(p,r,s)$ is
an open problem in general. A breakthrough was recently achieved by
Del Pino and Dolbeault \cite{deldol02, deldol03} (see also
\cite{cornazvil04} for a different approach). They obtained the
following sharp one-parameter family of inequalities: Suppose that
$1< p < n$, $p < q \leq p(n-1)/(n-p)$ and let
\begin{equation} \label{const17}
r=\frac{p(q-1)}{p-1} \qquad \mbox{and} \qquad
\theta=\frac{n(q-p)}{(q-1)(np-(n-p)q)}.
\end{equation}
Then, for every $f \in D^{p,q}(\mathbb{R}^n)$,
\begin{equation} \label{deldolgag}
\|f\|_r \leq \gamma_{n,p,q}\, \|\nabla f\|_p^{\theta}\,
\|f\|_q^{1-\theta},
\end{equation}
where $D^{p,q}$ denotes the completion of the space of smooth
compactly supported functions with respect to the norm
$\|\cdot\|_{p,q}$ defined by $\|f\|_{p,q}=\|\nabla f\|_p + \|f\|_q$.
The optimal constant $\gamma_{p,q}$ is given by
\[\gamma_{n,p,q}=\left (\frac{q-p}{p\sqrt{\pi}} \right )^{\theta}\! \left (  \frac{pq}{n(q-p)} \right
)^{\theta/p}\! \left ( \frac{\delta}{pq} \right )^{1/r}\! \left (
\frac{\Gamma\left (\frac{q(p-1)}{q-p} \right ) \Gamma \left (1 +
\frac{n}{2} \right ) }{\Gamma \left ( \frac{\delta(p-1)}{p(q-p)}
\right )\Gamma \left ( 1 + \frac{n(p-1)}{p}  \right )} \right
)^{\theta/n},
\] where $\delta=np-q(n-p)$. Equality holds in (\ref{deldolgag}) if and only if
for some $a \in \mathbb{R}$, $b > 0$ and $x_0 \in \mathbb{R}^n$,
\[f(x)=a \left (1+b|x-x_0|^{p/(p-1)} \right )^{-(p-1)/(q-p)}.  \]
Observe that for $q=p(n-1)/(n-p)$, we have $\theta=1$ and inequality
(\ref{deldolgag}) becomes the sharp $L_p$ Sobolev inequality
(\ref{sob}) of Aubin and Talenti. On the other hand, the logarithmic
Sobolev inequality (\ref{lplogsob}) corresponds to the limit $q
\rightarrow p$ in (\ref{deldolgag}). Thus the Gagliardo--Nirenberg
inequalities (\ref{deldolgag}) interpolate between the sharp $L^p$
Sobolev and the logarithmic Sobolev inequalities.

We conclude this final section with a strengthened family of
asymmetric affine Gagliardo--Nirenberg inequalities which
interpolate between inequalities (\ref{aaffsob}) and
(\ref{afflogsob}). It is an immediate corollary of Theorem
\ref{main} and (\ref{deldolgag}):

\begin{corollary} Let $1< p < n$, $p < q \leq p(n-1)/(n-p)$ and let $r, \theta$ be given by (\ref{const17}).
If $f \in D^{p,q}(\mathbb{R}^n)$, then
\begin{equation} \label{agagnir}
\|f\|_r \leq \gamma_{n,p,q}\, \mathcal{E}_p^+(f)^{\theta}\,
\|f\|_q^{1-\theta}.
\end{equation}
Equality is attained in (\ref{agagnir}) if for some $a \in
\mathbb{R}$, $\phi \in \mathrm{GL}(n)$ and $x_0 \in \mathbb{R}^n$,
\[f(x)=a \left (1+|\phi(x-x_0)|^{p/(p-1)} \right )^{-(p-1)/(q-p)}.  \]
\end{corollary}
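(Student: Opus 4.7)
The plan is to follow exactly the template used in the proof of Corollary 2, with the sharp Euclidean Gagliardo--Nirenberg inequality (\ref{deldolgag}) of Del Pino--Dolbeault replacing (\ref{lplogsob}) and with the reduction through symmetric rearrangement unchanged. Given $f \in D^{p,q}(\mathbb{R}^n)$, the first step is to apply (\ref{deldolgag}) to $f^{\mbox{\begin{tiny}$\bigstar$\end{tiny}}}$, obtaining
\[
\|f^{\mbox{\begin{tiny}$\bigstar$\end{tiny}}}\|_r \leq \gamma_{n,p,q}\,\|\nabla f^{\mbox{\begin{tiny}$\bigstar$\end{tiny}}}\|_p^{\theta}\,\|f^{\mbox{\begin{tiny}$\bigstar$\end{tiny}}}\|_q^{1-\theta}.
\]
By equimeasurability (\ref{intstar}) applied to $\Phi(s)=s^r$ and $\Phi(s)=s^q$, the $L^r$ and $L^q$ norms of $f$ and $f^{\mbox{\begin{tiny}$\bigstar$\end{tiny}}}$ coincide, so the left-hand side equals $\|f\|_r$ and the rightmost factor equals $\|f\|_q^{1-\theta}$.

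The second step is to replace $\|\nabla f^{\mbox{\begin{tiny}$\bigstar$\end{tiny}}}\|_p$ by $\mathcal{E}_p^+(f^{\mbox{\begin{tiny}$\bigstar$\end{tiny}}})$. Since $f^{\mbox{\begin{tiny}$\bigstar$\end{tiny}}}$ is spherically symmetric, $\|\mathrm{D}_u^+ f^{\mbox{\begin{tiny}$\bigstar$\end{tiny}}}\|_p$ is independent of $u\in S^{n-1}$, and the identity
\[
\mathcal{E}_p^+(f^{\mbox{\begin{tiny}$\bigstar$\end{tiny}}}) = \|\nabla f^{\mbox{\begin{tiny}$\bigstar$\end{tiny}}}\|_p
\]
already used in the proof of Corollary 2 applies here verbatim. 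Combining this with Theorem \ref{main}, which gives $\mathcal{E}_p^+(f^{\mbox{\begin{tiny}$\bigstar$\end{tiny}}}) \leq \mathcal{E}_p^+(f)$, yields inequality (\ref{agagnir}).

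For the equality discussion, the extremals of (\ref{deldolgag}) identified by Del Pino--Dolbeault are of the form $a(1+b|x-x_0|^{p/(p-1)})^{-(p-1)/(q-p)}$, which (after translation) are already spherically symmetric; hence equality holds in the first step applied to such a function, and equality also holds in Theorem \ref{main} since the function equals its own symmetric rearrangement up to translation. The asserted extremal family $a(1+|\phi(x-x_0)|^{p/(p-1)})^{-(p-1)/(q-p)}$ with $\phi\in\mathrm{GL}(n)$ is then obtained by invoking the $\mathrm{GL}(n)$ behavior of $\mathcal{E}_p^+$, $\|\cdot\|_r$, and $\|\cdot\|_q$: the scaling condition that defines $\theta$ via (\ref{const17}) is precisely what makes inequality (\ref{agagnir}) invariant under $f\mapsto f\circ\phi$ for arbitrary $\phi\in\mathrm{GL}(n)$, so composing a radial extremal with any $\phi\in\mathrm{GL}(n)$ still gives equality.

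There is no real obstacle: the argument is a direct application of the two ingredients, and the only point requiring care is the verification that composition with a general (not necessarily volume-preserving) $\phi\in\mathrm{GL}(n)$ preserves equality, which is a purely homogeneity-based check using the definition of $\theta$ in (\ref{const17}) and the fact that the common scaling exponent of both sides of (\ref{agagnir}) under $f\mapsto f\circ\phi$ agrees.
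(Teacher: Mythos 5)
Your proposal is correct and is exactly the argument the paper intends: the paper states this corollary without proof as an ``immediate'' consequence of Theorem \ref{main} and (\ref{deldolgag}), and your chain --- apply Del Pino--Dolbeault to $f^{\mbox{\begin{tiny}$\bigstar$\end{tiny}}}$, use equimeasurability for the $L^r$ and $L^q$ norms, use $\mathcal{E}_p^+(f^{\mbox{\begin{tiny}$\bigstar$\end{tiny}}})=\|\nabla f^{\mbox{\begin{tiny}$\bigstar$\end{tiny}}}\|_p$ as in Corollary 2, then invoke Theorem \ref{main}, with the extremal family obtained from the radial Euclidean extremals via the $\mathrm{GL}(n)$-invariance guaranteed by the choice of $\theta$ in (\ref{const17}) --- is precisely that reasoning.
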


\end{document}